\newcommand{\Exp}[3]{\mathbb{E}_{#1}^{#2}\left[#3\right]}
\newcommand{\Prob}[3]{\mathbb{P}_{#1}^{#2}\left[#3\right]}
\newcommand{\mc}[1]{{\mathcal #1}}
\newcommand{\mb}[1]{{\mathbf #1}}
\newcommand{\bb}[1]{{\mathbb #1}}
\let\orgdescriptionlabel\descriptionlabel
\renewcommand*{\descriptionlabel}[1]{%
	\let\orglabel\label
	\let\label\@gobble
	\phantomsection
	\edef\@currentlabel{#1}%
	\let\label\orglabel
	\orgdescriptionlabel{#1}%
}
\begin{document}

\title*{The boundary driven zero-range process}

\author{Susana Fr\'ometa, Ricardo Misturini and Adriana Neumann}

\institute{Susana Fr\'ometa \at UFRGS, Instituto de Matem\'atica e Estat\'istica, Campus do Vale, Av. Bento Gon\c calves, 9500. CEP 91509-900, Porto Alegre, Brasil \email{susana.frometa@ufrgs.br}
	\and Ricardo Misturini  \at UFRGS, Instituto de Matem\'atica e Estat\'istica, Campus do Vale, Av. Bento Gon\c calves, 9500. CEP 91509-900, Porto Alegre, Brasil \email{ricardo.misturini@ufrgs.br}
\and Adriana Neumann \at UFRGS, Instituto de Matem\'atica e Estat\'istica, Campus do Vale, Av. Bento Gon\c calves, 9500. CEP 91509-900, Porto Alegre, Brasil \email{aneumann@mat.ufrgs.br}}

\maketitle

\abstract*{
 We study the asymptotic behaviour of the   symmetric zero-range process   in the finite lattice $\{1,\ldots, N-1\}$ with slow boundary, in which particles are created at site $1$ or annihilated at site $N\!-\!1$ with rate proportional to $N^{-\theta}$, for $\theta\geq 1$.  
 We present the invariant measure for this model and obtain the hydrostatic limit. In order to understand the asymptotic behaviour of the spatial-temporal evolution of this model under the diffusive scaling, we start to analyze the hydrodynamic limit, exploiting attractiveness as an essential ingredient. We obtain that the hydrodynamic equation  has boundary conditions that depend on the value of $\theta$. 
}

\abstract{
	We study the asymptotic behaviour of the   symmetric zero-range process   in the finite lattice $\{1,\ldots, N-1\}$ with slow boundary, in which particles are created at site $1$ or annihilated at site $N\!-\!1$ with rate proportional to $N^{-\theta}$, for $\theta\geq 1$.  
	We present the invariant measure for this model and obtain the hydrostatic limit. In order to understand the asymptotic behaviour of the spatial-temporal evolution of this model under the diffusive scaling, we start to analyze the hydrodynamic limit, exploiting attractiveness as an essential ingredient. We obtain, through some heuristic arguments, the hydrodynamic equation, whose boundary conditions depend on $\theta$. 
}

\keywords{Zero-range process, Slow boundary,  Invariant measure, Hydrostatic limit, Hydrodynamic limit, Boundary conditions.}

\section{Introduction}

The zero-range process, originally introduced in $1970$ by Spitzer \cite{spitzer}, is a model that describes the behaviour of interacting particles moving on a lattice without restriction on the total number of particles per site. In this model, a particle leaves a site according to a jump rate $g(k)$ that only depends on the number of particles, $k$, in that site.  The zero-range process has been mostly studied in infinite lattices (see  \cite{andjel84,andjelvares1987,pat2,pat3,rezak1991}) and in discrete torus (see \cite{gro1,gro2,ines,bogo,kl} and the references therein). In the present work we consider the process defined in the finite lattice $I_N=\{1,\ldots,N-1\}$ with creation and annihilation of particles at the boundary. 

One of the main interest in the study of interacting particle systems is the derivation of partial differential equations (PDE) to describe the time evolution of the macroscopic density of particles as the lattice is rescaled to the continuum. Such classical scaling limit is called  \textit{hydrodynamic limit} and the associated  PDE  is called   \textit{hydrodynamic equation}. In recent years there has been an increasing interest in models that leads to hydrodynamic equations with boundary conditions (see \cite{bodineau,mariaeulalia1,mariaeulalia2,fgn1}). This has been done, for example, for the exclusion process in \cite{baldasso} and for the porous medium model in \cite{bonorino}. In both cases, the lattice $I_N$ is connected to reservoirs so that particles can be inserted into or removed from the system with rate proportional to $N^{-\theta}$, and the obtained hydrodynamic equations have boundary conditions that depend on the value of $\theta$. One common characteristic of the models in \cite{baldasso,bonorino} is that the exclusion rule only allows one particle per site, which provides a natural control for the number of particles in the system.

For the classical zero-range process in the discrete torus, see \cite[Chapter 5]{kl}, conservation of particles is an extensively used property in the proof of hydrodynamic limit, together with a hypothesis that controls the relative entropy of the initial distribution with respect to some invariant measure. In the open zero-range process, considered in the present work, the number of particles in the system is not conserved as it was in the process in the discrete torus and neither bounded as it was in the exclusion process and porous medium model. To overcome this difficulty, instead of assuming a relative entropy hypothesis, we exploit the attractiveness present in our model under the assumption that the jump rate function $g$ is non decreasing and that the initial distribution is bounded above by the invariant measure. Attractiveness was also an essential ingredient in \cite{andjelvares1987}, where  the authors obtained the hydrodynamic limit through preservation of local equilibrium for the asymmetric zero-range process on $\bb Z$ under Euler scaling. The same was done, for example, for the symmetric zero-range process in the discrete torus under the diffusive scaling, see \cite[Chapter~9]{kl}.

In this work we consider a symmetric nearest-neighbour zero-range process in $I_N$ with the following dynamics at the boundary: a particle is inserted into the system at site $1$ with rate $\alpha/N^\theta$ and removed from the system through site $N-1$ with rate $g(k)/N^\theta$, if there are $k$ particles at site $N-1$\footnote{See Remark \ref{R} for a more general dynamics allowing creation and annihilation of particles at both sides of the boundary.}, where $\alpha\geq 0$, $\theta\geq 1$ and $ g$ is same  jump rate function used in $I_N$. Computing analytically the stationary distribution of a non-equilibrium stochastic model is usually a very challenging task, see \cite{derrida19993,derrida2002,derrida2007,blythe}. However, an important general aspect of the zero-range process, that is not present in the models considered in \cite{baldasso,bonorino}, is that its invariant distribution is a product measure that can be explicitly computed, see \cite{spitzer,andjel82}. This is also true in our case, despite of the boundary conditions, as already considered in \cite{ferrari,lms2005,bertin2018}, and the resulting steady-state, when it exists, is a product measure imitating the periodic case, but now it is characterized by a non homogeneous space-dependent fugacity which is a function of the boundary rates. In Section~\ref{sectioninvariant}, we present the invariant measure for our model obtained through elementary computations involving the jump rates. Having the explicit form of the invariant measure, we obtain the stationary density profile, the so called \textit{hydrostatic limit}. 

Our main goal is to describe the asymptotic behaviour for the time evolution of the spacial density of particles for zero-range process with slow boundary introduced above. More precisely, we want to prove that, if we start our evolution with an initial configuration of particles that converges to a macroscopic density profile $\gamma:[0,1]\to\bb R_+$,  as $N\to\infty$, then, under the diffusive scaling, and in a fixed time interval $[0, T]$, the time trajectory of the spatial density of particles, $\{\pi_t^N:\;t\in[0,T]\}$, converges to a deterministic limit, $\{\pi_t:\;t\in[0,T]\}$. In the present work we prove relative compactness  for the sequence $\{\pi_t^N:\;t\in[0,T]\}$ and that the  limit points, $\{\pi_t:\;t\in[0,T]\}$, are trajectories of absolutely continuous measures on $[0,1]$, that is, $\pi_t(du)=\rho(t,u)\,du$, for  $t\in[0,T]$ and $u\in[0,1]$. We conjecture, based on some heuristic arguments, that $\rho$ is  the weak solution of the following  non-linear diffusion equation with  boundary conditions:
		\begin{equation*}%\label{eq_forte}
		\left\{
		\begin{array}{rcll}
		\partial_t \rho(t,u) &=& \Delta \Phi (\rho(t,u)),& \text{for } u\in(0,1)\text{ and } t\in(0,T],$$\\
		\partial_u\Phi( \rho(t,0))&=&-\kappa\,\alpha\,, & \text{for } t\in(0,T],$$\\
		\partial_u\Phi( \rho(t,1))&=&-\kappa\, \Phi( \rho(t,1))\,, & \text{for } t\in(0,T],$$\\
		\rho(0,u)&=&\gamma(u), & \text{for } u\in[0,1],
		\end{array}
		\right.
		\end{equation*}
where $\kappa=1$, if $\theta=1$, and $\kappa=0$, if $\theta>1$. The function $\Phi$ will be defined in \eqref{PHI}, in terms of  the jumps rate $g$. In Remark \ref{asymptoticprofile}, we explain what happens in the stationary regime for the case $\theta<1$.

The paper is organized as follows. In Section \ref{sectiondefinition}, we introduce some notations and define precisely the zero-range process with the boundary dynamics that we are considering. In Section \ref{sectioninvariant}, we present the invariant measure and observe the different asymptotic behaviour of the fugacity profile, depending on the value of~$\theta$. We also provide the invariant measure for a more general dynamics that allows creation and annihilation of particles in both sides of $I_N$. In Section \ref{sectionhydrostatic}, we define the notion of measures associated to a density profile and present the hydrostatic limit for our model. The small Section \ref{sectionattractiveness} is devoted to recall the essential property of attractiveness for the zero-range process. In Section \ref{tight}, we prove tightness for the sequence of probabilities of interest. For that, we introduce the related martingales that will be very useful also in the derivation of the hydrodynamic equation. In Section \ref{abscont}, we start the characterization of the limit points by showing concentration on absolutely continuous measures. In Section \ref{sectionhydrodynamic}, we present the hydrodynamic equation that we conjecture for this model, together with the necessary steps for a complete proof the of hydrodynamic limit. In Section \ref{charac_limit_points_heu}, we show how to obtain the integral form of the hydrodynamic equation from the Dynkin martingales presented in Section \ref{tight}. We use some heuristic arguments that can be formalized through some fundamental replacement lemmas, whose proof is postponed to a future work. Finally, in Section \ref{B}, we present the hydrodynamic equation obtained if we consider the general model presented in Remark \ref{R}, in which particles are created and annihilated in both sides of $I_N$.

\section{Definition of the model}\label{sectiondefinition}

Let $I_N=\{1,\ldots,N\!-\!1\}$ be the finite lattice where the distinguishable particles will be moving around, we called it by bulk. For $x\in I_N$, the occupation variable $\eta(x)$ stands for the number of particles at site~$x$. The zero-range process is an evolution without restriction on the total number of particles per site, and therefore the state space for the configurations $\eta$ is the set $\Omega_N=\bb N^{I_N}$.

\begin{figure}
	\begin{center}
		
		\begin{tikzpicture}[thick, scale=0.8][h!]
		
		%lattice
		
		\draw[step=1cm,gray,very thin] (-6,0) grid (6,0);
		
		\foreach \y in {-6,...,6}{
			\draw[color=black,very thin] (\y,-0.2)--(\y,0.2);
		}
		
		\draw (-6,-0.5) node [color=black] {$\scriptstyle{1}$};
		\draw (-5,-0.5) node [color=black] {$\scriptstyle{2}$};
		\draw (-4,-0.5) node [color=black] {$\scriptstyle{3}$};
		%\draw (-3,-0.5) node [color=black] {$\scriptstyle{4}$};
		\draw (-2,-0.5) node [color=black] {$\scriptstyle{...}$};
		\draw (-1,-0.5) node [color=black] {$\scriptstyle{x-1}$};
		\draw (0,-0.5) node [color=black] {$\scriptstyle{x}$};
		\draw (1,-0.5) node [color=black] {$\scriptstyle{x+1}$};
		%\draw (2,-0.5) node [color=black] {$\scriptstyle{9}$};
		%\draw (3,-0.5) node [color=white] {$\scriptstyle{10}$};
		\draw (3,-0.5) node [color=black] {$\scriptstyle{...}$};
		\draw (5,-0.5) node [color=black] {$\scriptstyle{N\!-\!2}$};
		\draw (6,-0.5) node [color=black] {$\scriptstyle{N\!-\!1}$};

		%particulas do lado esquerdo
		
		\node[ball color=black!30!, shape=circle, minimum size=0.63cm] at (-6,1.2) {};
		\node[ball color=black!30!, shape=circle, minimum size=0.63cm]  at (-6.,0.4) {};
		\node[shape=circle,minimum size=0.63cm] (A) at (-6.,2) {};
		%\node[shape=circle,minimum size=0.63cm] (B) at (-5.,2) {};
		\node[ball color=black!30!,shape=circle,minimum size=0.63cm]  at (-5.,0.4) {};
		%\path [->] (B) edge[bend right=60] node[above] {{\footnotesize{$g(\eta(2))$}}} (A);
		
		\node[shape=circle,minimum size=0.63cm] (C) at (-6.,1.3) {};
		\node[shape=circle,minimum size=0.63cm] (D) at (-5,1.3) {};
		\path [->] (C) edge[bend left=60] node[above right] {{\footnotesize{$g(\eta(1))$}}} (D);

		\node[shape=circle,minimum size=0.63cm] (OO) at (-7,1.3) {};
		\path [->] (OO) edge[bend left=60] node[above] {{\footnotesize{$\frac{\alpha}{N^\theta}$}}} (C);
		
%		
%		\node[shape=circle,minimum size=0.63cm] (OO0) at (-7.1,-0.4) {};
%		\node[shape=circle,minimum size=0.63cm] (C0) at (-6,-0.4) {};
%		
%		\path [<-] (C0) edge[bend left=60] node[below ] {{\footnotesize{$\frac{\alpha}{N^\theta}$}}} (OO0);
%		
%		
		
		%particulas do lado direito

		\node[ball color=black!30!, shape=circle, minimum size=0.63cm]  at (6.,0.4) {};
		\node[ball color=black!30!, shape=circle, minimum size=0.63cm] at (6.,1.2) {};
		\node[ball color=black!30!, shape=circle, minimum size=0.63cm] at (6.,2) {};
		\node[shape=circle,minimum size=0.63cm] (E) at (6.,2.1) {};
		\node[shape=circle,minimum size=0.63cm] (F) at (5,2.1) {};
		%\node[shape=circle,minimum size=0.63cm] (G) at (6,3.0) {};
		%\node[shape=circle,minimum size=0.63cm] (H) at (4.8,3.0) {};
		\path [->] (E) edge[bend right =60] node[above left] {{\footnotesize{$g(\eta(N-1))$}}} (F);
		%\path [->] (H) edge[bend left=60] node[above] {{\footnotesize{$g(\eta(N\!-\!2))$}}} (G);
		
		\node[ball color=black!30!, shape=circle, minimum size=0.63cm]  at (5,1.2) {};
		\node[ball color=black!30!, shape=circle, minimum size=0.63cm]  at (5.,0.4) {};

		\node[shape=circle,minimum size=0.63cm] (H) at (7,2.1) {};
		\path [->] (E) edge[bend left=60] node[above] {
			{{\footnotesize{$\frac{g(\eta(N\!-\!1))}{N^\theta}$}}}}(H);
		
		\node[shape=circle,minimum size=0.63cm] (H0) at (7.1,-0.4) {};
		\node[shape=circle,minimum size=0.63cm] (E0) at (6,-0.4) {};
		
		\path [->] (H0) edge[color=white, bend left=60] node[below ] {{\footnotesize{$\frac{\beta}{N^\theta}$}}} (E0);

		%particulas no meio
		\node[ball color=black!30!, shape=circle, minimum size=0.63cm] (O) at (-4,0.4) {};
		\node[ball color=black!30!, shape=circle, minimum size=0.63cm]  at (-3.,0.4) {};
		\node[ball color=black!30!, shape=circle, minimum size=0.63cm]  at (-3.,1.2) {};
		\node[ball color=black!30!, shape=circle, minimum size=0.63cm]  at (-3.,2) {};
		
		\node[ball color=black!30!, shape=circle, minimum size=0.63cm]  at (-1.,0.4) {};
		\node[ball color=black!30!, shape=circle, minimum size=0.63cm]  at (-1.,1.2) {};
		
		\node[ball color=black!30!, shape=circle, minimum size=0.63cm]  at (0.,0.4) {};
		\node[ball color=black!30!, shape=circle, minimum size=0.63cm]  at (0.,1.2) {};
		\node[ball color=black!30!, shape=circle, minimum size=0.63cm]  at (0.,2) {};
		\node[ball color=black!30!, shape=circle, minimum size=0.63cm]  at (0.,2.8) {};
		
		\node[ball color=black!30!, shape=circle, minimum size=0.63cm]  at (1.,0.4) {};
		\node[ball color=black!30!, shape=circle, minimum size=0.63cm]  at (1.,1.2) {};
		\node[ball color=black!30!, shape=circle, minimum size=0.63cm]   at (1.,2) {};

		\node[ball color=black!30!, shape=circle, minimum size=0.63cm]  at (2.,0.4) {};
		
		\node[ball color=black!30!, shape=circle, minimum size=0.63cm]  at (4,0.4) {};

		%setas do meio

		\node[shape=circle,minimum size=0.63cm] (G1) at (0,3) {};
		\node[shape=circle,minimum size=0.63cm] (H1) at (1,2.8) {};
		\path [->] (G1) edge[bend left =60] node[above] {{\footnotesize{$\quad g(\eta(x))$}}} (H1);

		\node[shape=circle,minimum size=0.63cm] (J1) at (-1,2.8) {};
		\path [<-] (J1) edge[bend left =60] node[above] {{\footnotesize{$g(\eta(x))\quad$}}} (G1);
			\end{tikzpicture}
		%\bigskip
		\caption{The boundary driven zero-range process. }\label{fig.1}
	\end{center}
\end{figure}
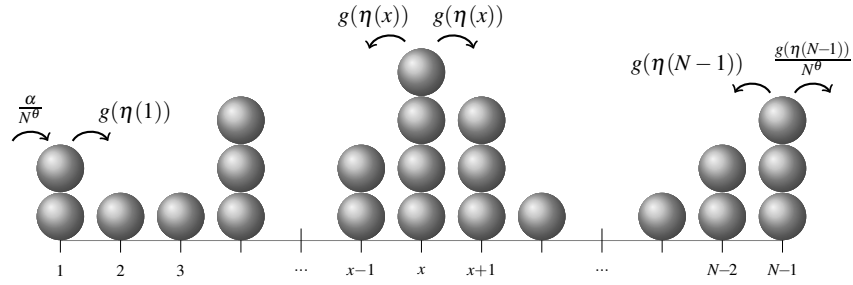

The process is defined through a function $g:\bb N \to \bb R_+$, with $g(0)=0$. 
We assume, throughout this work, that $g$ has bounded variation in the following sense: 
\begin{equation}\label{gmostlinear}
g^*=\sup_{k}|g(k+1)-g(k)|<\infty.
\end{equation}
The bulk dynamics can be described as: a particle leaves a site $x\in\{2,\dots, N\!-\! 2\}$ with rate $2g(\eta(x))$, and jumps to one of the neighbouring sites ($x-1$ or $x+1$) chosen uniformly. 
A particle jumps from the  sites $x=1$ and $x=N\!-\!1$ to a neighbour site in $I_N$ with rate $g(\eta(x))$. The boundary dynamics is given by the following birth and death processes
at the sites $x=1$ and $x=N\!-\!1$ (see Figure \ref{fig.1}).  For fixed non-negative parameters $\alpha$ and $\theta$,  a particle is  inserted into the system with rate $\alpha/N^\theta$ at site $1$  and removed  with rate $g(\eta(N-1))/N^\theta$ through the site $N\!-\!1$.\footnote {See Remark \ref{R} for a more general boundary dynamics.}    

We can entirely characterize the continuous time Markov process $\{\eta_t:\;t \geq0\}$ by its infinitesimal generator $L_N$ given by
\begin{equation}\label{generator}
L_N=L_{N,0}+L_{N,b},
\end{equation}
where $L_{N,0}$ and $L_{N,b}$ represent the infinitesimal generators of the bulk dynamics and the boundary dynamics, respectively.
The generators act on functions $f:\Omega_N\to\bb R$ as
\begin{align}
(L_{N,0}f)(\eta)=&\sum_{x=1}^{N\!-\!1}\sum_{y\in\{x-1,x+1\}\cap I_N}g(\eta(x))\;[f(\eta^{x,y})-f(\eta)],\label{jumprate1}\\
(L_{N,b}f)(\eta)=&\frac{\alpha}{N^\theta}[f(\eta^{1+})-f(\eta)]+\frac{g(\eta(N-1))}{N^\theta}[f(\eta^{(N\!-\!1)-})-f(\eta)],\label{jumprate2}
\end{align}
where $\eta^{x,y}$ represents the configuration obtained when, in the configuration $\eta$, a particle jumps from site $x$ to $y$, i.e,
\begin{equation}
\eta^{x,y}(z)=
\begin{cases}
\eta(z)\,, & \text{ if } z\neq x,y, \\
\eta(z)-1\,, & \text{ if } z=x,\\
\eta(z)+1\,, & \text{ if } z=y;
\end{cases}
\end{equation}
and $\eta^{\omega\pm}$ represents a configuration obtained from $\eta$ adding or subtracting one particle at site $\omega$, that is,
\begin{equation}
\eta^{\omega\pm}(z)=
\begin{cases}
\eta(z)\,, & \text{ if } z\neq \omega, \\
\eta(z)\pm 1\,, & \text{ if } z=\omega.
\end{cases}
\end{equation}

\begin{remark} Contrary to the classical zero-range process on the torus, see for example \cite{kl}, the process with these boundary conditions is not reversible, and does not conserve the number of particles.
\end{remark}

\section{Invariant measure}\label{sectioninvariant}

Since we do not have conservation of particles, the Markov process with generator $L_N$ is irreducible in $\Omega_N$. If the process is non-explosive and has an invariant distribution, then the invariant measure is unique and the process is positive recurrent (see ~\cite[Proposition 3.5.3]{n}). Coupling with a birth and death processes, we can see that if $g$ is such that $\sum_{k=1}^\infty\frac{1}{\max_{1\leq i \leq k}g(i)}=\infty$, then the process is non-explosive. This condition is satisfied, since we are assuming that $g$ has bounded variation, as stated in \eqref{gmostlinear}.

A particular aspect of the zero-range process is that its invariant measure can be explicitly computed (see \cite{spitzer,andjel82}). This can also be done in our case, despite of the boundary conditions, as already considered in \cite{ferrari,lms2005}. For the convenience of the reader, we will present the calculations in the following.

Inspired by the periodic case, we look for an invariant probability $\bar\nu^N$ which is a product measure on $\Omega_N$ with marginals given by 
\begin{equation}\label{invariant}
\bar\nu^N\{\eta: \eta(x)=k\}=\frac{1}{Z(\varphi(x))}\frac{(\varphi(x))^k}{g(k)!},
\end{equation}
for $x\in I_N$. Here $g(k)!$ stands for $\Pi_{1\leq j\leq k} g(j)$, and $g(0)!=1$, $\varphi:I_N\to\bb R_+$ is a function to be determined, and $Z$ is the normalizing partition function
\begin{equation}\label{partition}
Z(\varphi)=\sum_{k\geq0}\frac{\varphi^k}{g(k)!}.
\end{equation}
Denote by $\varphi^*$ the radius of convergence of the partition function \eqref{partition}.

 \begin{lemma} \label{invariantmeasurelemma}
 	For $\alpha$, $\theta$ and $N$ satisfying $\alpha(\frac{1}{N^{\theta-1}}-\frac{2}{N^\theta}+1)<\varphi^*$, the measure $\bar\nu^N$ defined in \eqref{invariant} with fugacity profile
	\begin{equation}\label{varphi}
	\varphi(x)=\varphi^N(x)=-\tfrac{\alpha}{N^\theta}(x+1)+\tfrac{\alpha}{N^{\theta-1}}+\alpha\,,\quad x\in I_N\,,
	\end{equation}
is the unique invariant distribution for the Markov process on $\Omega_N$ with infinitesimal generator $L_N$, defined in \eqref{generator}.
\end{lemma}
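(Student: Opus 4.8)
The plan is to verify directly that $\bar\nu^N$ is stationary, i.e. that $\int_{\Omega_N} (L_N f)\, d\bar\nu^N = 0$ for every bounded $f:\Omega_N\to\bb R$, and then invoke the general uniqueness theory. Because $\bar\nu^N$ is a product measure giving positive mass to every configuration, the identity $\int h f\, d\bar\nu^N = 0$ for all $f$ is equivalent to $h\equiv 0$; the whole task therefore reduces to rewriting $\int (L_N f)\, d\bar\nu^N$ in the form $\int \big(\sum_z c_z\, g(\eta(z)) + c_0\big) f\, d\bar\nu^N$ and checking that every coefficient vanishes for the profile \eqref{varphi}.

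The engine of the computation is the elementary identity, valid for the marginals \eqref{invariant},
\begin{gather*}
\frac{\bar\nu^N(\eta^{x,y})}{\bar\nu^N(\eta)} = \frac{g(\eta(x))}{\varphi(x)}\,\frac{\varphi(y)}{g(\eta(y)+1)},\\
\frac{\bar\nu^N(\eta^{1-})}{\bar\nu^N(\eta)}=\frac{g(\eta(1))}{\varphi(1)}, \qquad \frac{\bar\nu^N(\eta^{(N-1)+})}{\bar\nu^N(\eta)}=\frac{\varphi(N-1)}{g(\eta(N-1)+1)},
\end{gather*}
which follow at once from $\mu_x(k)\propto \varphi(x)^k/g(k)!$. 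Substituting these into a change of variables $\eta\mapsto \eta^{x,y}$ (resp. $\eta\mapsto\eta^{1-}$, $\eta\mapsto\eta^{(N-1)+}$) in each term of \eqref{jumprate1}--\eqref{jumprate2}, I would obtain
$$
\int (L_{N,0}f)\,d\bar\nu^N = \int\sum_{x}\sum_{y\sim x,\,y\in I_N}\Big[\tfrac{\varphi(x)}{\varphi(y)}\,g(\eta(y))-g(\eta(x))\Big] f\,d\bar\nu^N ,
$$
together with the analogous boundary expression $\int\{\tfrac{\alpha}{N^\theta}[\tfrac{g(\eta(1))}{\varphi(1)}-1]+\tfrac{1}{N^\theta}[\varphi(N-1)-g(\eta(N-1))]\}f\,d\bar\nu^N$. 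The structural fact that makes the affine profile the right guess is the zero-range identity $\mathbb{E}_{\bar\nu^N}[g(\eta(x))]=\varphi(x)$, which drops out of the same computation.

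Collecting the coefficient of $g(\eta(z))$, the interior sites $2\le z\le N-2$ produce $\varphi(z)^{-1}[\varphi(z-1)+\varphi(z+1)-2\varphi(z)]$, i.e. the discrete Laplacian of $\varphi$, which vanishes because \eqref{varphi} is affine in $x$. After adding the boundary part, the remaining coefficients reduce to three scalar conditions: the coefficient of $g(\eta(1))$ forces $\varphi(2)-\varphi(1)=-\alpha/N^\theta$; the coefficient of $g(\eta(N-1))$ forces $\varphi(N-2)-\varphi(N-1)=\varphi(N-1)/N^\theta$; and the constant term forces $\varphi(N-1)=\alpha$. One then checks by direct substitution that \eqref{varphi} satisfies all three: it has constant slope $-\alpha/N^\theta$, which yields the two gradient conditions, and $\varphi(N-1)=\alpha$. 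The hypothesis $\alpha(N^{1-\theta}-2N^{-\theta}+1)<\varphi^*$ is precisely $\varphi(1)<\varphi^*$, i.e. the largest fugacity lies inside the radius of convergence of \eqref{partition}, so each marginal is a genuine probability.

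Uniqueness then follows from the general theory already recalled: the chain is irreducible on $\Omega_N$ and, by \eqref{gmostlinear} and comparison with a birth-and-death process, non-explosive; having exhibited an invariant probability, \cite[Proposition 3.5.3]{n} gives that it is the unique invariant distribution and the process is positive recurrent. I expect the main obstacle to be purely organizational rather than conceptual: correctly treating the two boundary sites of the bulk generator \eqref{jumprate1}, where only one neighbour lies in $I_N$, and tracking the domains of the change-of-variables maps so that the reduction ``$\int hf\,d\bar\nu^N=0$ for all $f \Rightarrow h\equiv0$'' is legitimate.
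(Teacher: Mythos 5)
Your proposal is correct and is essentially the paper's own argument: both verify stationarity through the product-form ratio identities, reduce everything to the vanishing of the discrete Laplacian of $\varphi$ in the bulk together with the same three boundary conditions ($\varphi(2)-\varphi(1)=-\tfrac{\alpha}{N^\theta}$, $\varphi(N-2)-\varphi(N-1)=\tfrac{\varphi(N-1)}{N^\theta}$, and $\varphi(N-1)=\alpha$), check that the affine profile \eqref{varphi} satisfies them, and obtain uniqueness from irreducibility plus non-explosiveness via \cite[Proposition 3.5.3]{n}. The only cosmetic difference is that you phrase the balance in the integrated (adjoint) form $\int (L_N f)\,d\bar\nu^N=0$ for all bounded $f$, whereas the paper checks the equivalent pointwise balance \eqref{condinvar} configuration by configuration.
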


\begin{proof}
	Let $\eta\in\Omega_N$ be an arbitrary configuration. We have to prove that
	\begin{equation}\label{condinvar}
	\sum_{\tilde \eta \neq \eta}\frac{\bar\nu^N(\tilde \eta)}{\bar\nu^N( \eta)}R(\tilde \eta, \eta)=\lambda(\eta)\,,
	\end{equation}
	where $R(\tilde \eta, \eta)$ is the rate at which the process jumps from $\tilde \eta$ to $\eta$ and
	\begin{equation}\label{invladoA}
	\lambda(\eta)=g(\eta(1))+2\sum_{x=2}^{N-2}g(\eta(x))+g(\eta(N-1))+\frac{\alpha}{N^\theta}+\frac{ g(\eta(N-1))}{N^\theta}
	\end{equation}
	is the rate at which the process jumps from the configuration $\eta$. In the left-hand side of the equation \eqref{condinvar}, there are four types of configurations $\tilde{\eta}$ for which $R(\tilde \eta, \eta)\neq0$: $\tilde \eta=\eta^{x,x+1}$ and $\tilde \eta=\eta^{x+1,x}$, for $x \in \{1,\ldots, N-2\}$, $\tilde \eta=\eta^{1-}$ and $\tilde \eta=\eta^{(N\!-\!1)+}$. Decomposing the summation in these types of configurations, using the definition of $\bar\nu^N$ in \eqref{invariant} and the jump rates in \eqref{jumprate1} and \eqref{jumprate2}, we can rewrite the left-hand side of \eqref{condinvar} as
	\begin{equation*}
	\sum_{x=1}^{N-2}\frac{\varphi(x+1)}{\varphi(x)}g(\eta(x))+\sum_{x=1}^{N-2}\frac{\varphi(x)}{\varphi(x+1)}g(\eta(x+1))+\frac{\alpha g(\eta(1))}{N^\theta\varphi(1)}+\frac{\varphi(N\!-\!1)}{N^\theta}.
	\end{equation*}
	Thus, changing the index in the second sum above, the last expression becomes
	\begin{equation}\label{invladoB}
	\begin{split}
	&\sum_{x=2}^{N-2}\!\frac{\varphi(x\!+\!1)\!+\!\varphi(x\!-\!1)}{\varphi(x)}\,g(\eta(x))
+\frac{\varphi(2)\!+\!\tfrac{\alpha}{N^\theta}}{\varphi(1)}\,g(\eta(1))\\\,&\quad\quad\quad+\,\frac{\varphi(N\!-\!2)}{\varphi(N\!-\!1)}\,g(\eta(N-1))+\frac{\varphi(N\!-\!1)}{N^\theta}.
	\end{split}
	\end{equation}
	In order to \eqref{invladoB} be equal to \eqref{invladoA} we must require $\frac{\varphi(x+1)+\varphi(x-1)}{\varphi(x)}=2$, for all $x\in\{2, \dots, N\!-\!2\}$. To get that, choose $\varphi$ a linear function, let us say $\varphi(x)=ax+b$. The other required conditions: $\frac{\varphi(2)+\frac{\alpha}{N^\theta}}{\varphi(1)}=1$,  $\frac{\varphi(N-2)}{\varphi(N\!-\!1)}=1+\frac{1}{N^\theta}$ and $\varphi(N\!-\!1)=\alpha$, are satisfied with the choice $a=-\frac{\alpha}{N^\theta}$ and $b=\frac{\alpha}{N^\theta}(N\!-\!1)+\alpha$, which leads to \eqref{varphi}.
 	
\end{proof}

\begin{remark} The condition $\alpha(\frac{1}{N^{\theta-1}}-\frac{2}{N^\theta}+1)<\varphi^*$ imposed in Lemma \ref{invariantmeasurelemma} ensures that the fugacity function satisfies $\varphi(x)<\varphi^*$ for all $x\in I_N$. Note that if $\varphi^*$ is finite (which occurs, for instance, when $g$ is bounded), then the probability measure $\bar\nu^N$ is not well defined if $\alpha$ is too big. This is quite intuitive, since large $\alpha$ (many particles entering the system) and small $g$ (few particles leaving the system) would imply transience of the process. 
\end{remark}

A simple computation shows that $E_{\bar\nu^N}\left[g(\eta(x))\right]=\varphi^N(x)$, for $x\in I_N$, where $E_{\bar\nu^N}$ denotes expectation with respect to the measure $\bar\nu^N$. That is why $\varphi^N(x)$ is called the fugacity at the site $x$.

\begin{remark}\label{asymptoticprofile}
We observe that, depending on the value of $\theta\in[0,\infty)$, we have different asymptotic behaviours of the fugacity, see Figure \ref{fig.3}:
\begin{itemize}
	\item For $\theta=1$, for $x\in I_N$, $\varphi^N(x)=\bar\varphi(\frac{x+1}{N})$, where the asymptotic fugacity profile $\bar\varphi:[0,1]\to\bb R$ is given by $\bar\varphi(u)=\alpha(2-u)$.
	\item For $\theta>1$, $\varphi^N(x)=\alpha+r_N(x)$, where $\lim_{N\to\infty}\sup_{x\in I_N}|r_N(x)|=0$. In this case, the asymptotic fugacity profile $\bar \varphi$ is equal to the constant $\alpha$.
	\item For $\theta<1$, we must look at the two different situations: $\varphi^*<\infty$ and $\varphi^*=\infty$. If $\varphi^*<\infty$, the partition function will not be defined for large values of $N$. If $\varphi^*=\infty$, it would make sense to consider $N\to\infty$, however, we will have $\varphi^N(1)\to\infty$. Thus, as $I_N$ is rescaled to the continuum, $\varphi^N$ can not be rescaled to a macroscopic profile $\bar\varphi:[0,1]\to\bb R$, as in the previous cases.    
\end{itemize}
\end{remark}

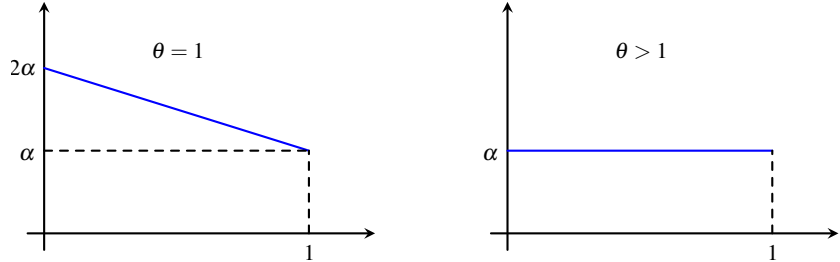
\begin{figure}
	\begin{center}
		\definecolor{qqqqff}{rgb}{0.,0.,1.}
		\begin{tikzpicture}[line cap=round,line join=round,>=stealth,x=0.44cm,y=0.44cm]
		\clip(-1,-1) rectangle (24,8);
		\draw [->,line width=0.8pt] (-0.5,0.) -- (10.,0.);
		\draw [->,line width=0.8pt] (0.,-0.5) -- (0.,7.);
		\draw [->,line width=0.8pt] (14.,-0.5) -- (14.,7.);
		\draw [->,line width=0.8pt] (13.5,0.) -- (24.,0.);
		\draw [line width=0.8pt,color=qqqqff] (0.,5.)-- (8.,2.5);
		\draw [line width=0.8pt,dash pattern=on 3pt off 3pt] (0.,2.5)-- (8.,2.5);
		\draw [line width=0.8pt,dash pattern=on 3pt off 3pt] (8.,0.)-- (8.,2.5);
		\draw (7.6,-0.12) node[anchor=north west] {$1$};
		\draw (-1,2.8) node[anchor=north west] {$\alpha$};
		\draw (-1.3,5.5) node[anchor=north west] {$2\alpha$};
		\draw [line width=0.8pt,color=qqqqff] (14.,2.5)-- (22.,2.5);
		\draw (13,2.8) node[anchor=north west] {$\alpha$};
		\draw (21.6,-0.12) node[anchor=north west] {$1$};
		\draw [line width=0.8pt,dash pattern=on 3pt off 3pt] (22.,0.)-- (22.,2.5);
		\draw (3,6) node[anchor=north west] {$\theta=1$};
		\draw (17,6) node[anchor=north west] {$\theta>1$};
		\end{tikzpicture}
		\caption{The asymptotic fugacity profile $\bar\varphi:[0,1]\to \bb R_+$ }\label{fig.3}
	\end{center}	
\end{figure}

	\begin{remark}\label{R}
	It is possible also to consider a more general model allowing creation and annihilation of particles at both sides of the boundary (see Figure \ref{fig.2}), let us say: at site $1$, particles are inserted into the system with rate $\frac{\alpha}{N^\theta}$ and removed from the system with rate $\frac{\lambda}{N^\theta}g(\eta(1))$; at site $N\!-\!1$, particles are inserted into the system with rate $\frac{\beta}{N^\theta}$ and removed from the system with rate $\frac{\delta}{N^\theta}g(\eta(N-1))$. Following the lines of Lemma \ref{invariantmeasurelemma} we found that the invariant probability is also a product measure with marginals given by \eqref{invariant} for a linear fugacity profile 

\begin{figure}
	\begin{center}
		
		\begin{tikzpicture}[thick, scale=0.8][h!]
		
		%lattice
		
		\draw[step=1cm,gray,very thin] (-5,0) grid (6,0);
		
		\foreach \y in {-5,...,6}{
			\draw[color=black,very thin] (\y,-0.2)--(\y,0.2);
		}
		
		\draw (-5,-0.5) node [color=black] {$\scriptstyle{1}$};
		\draw (-4,-0.5) node [color=black] {$\scriptstyle{2}$};
%		\draw (-4,-0.5) node [color=black] {$\scriptstyle{3}$};
		%\draw (-3,-0.5) node [color=black] {$\scriptstyle{4}$};
		\draw (-2,-0.5) node [color=black] {$\scriptstyle{...}$};
		\draw (-1,-0.5) node [color=black] {$\scriptstyle{x-1}$};
		\draw (0,-0.5) node [color=black] {$\scriptstyle{x}$};
		\draw (1,-0.5) node [color=black] {$\scriptstyle{x+1}$};
		%\draw (2,-0.5) node [color=black] {$\scriptstyle{9}$};
		%\draw (3,-0.5) node [color=white] {$\scriptstyle{10}$};
		\draw (3,-0.5) node [color=black] {$\scriptstyle{...}$};
		\draw (5,-0.5) node [color=black] {$\scriptstyle{N\!-\!2}$};
		\draw (6,-0.5) node [color=black] {$\scriptstyle{N\!-\!1}$};

		%particulas do lado esquerdo
		
		\node[ball color=black!30!, shape=circle, minimum size=0.63cm] at (-5,1.2) {};
		\node[ball color=black!30!, shape=circle, minimum size=0.63cm]  at (-5.,0.4) {};
		\node[shape=circle,minimum size=0.63cm] (A) at (-5.,2) {};
		%\node[shape=circle,minimum size=0.63cm] (B) at (-5.,2) {};
		\node[ball color=black!30!,shape=circle,minimum size=0.63cm]  at (-4.,0.4) {};
		%\path [->] (B) edge[bend right=60] node[above] {{\footnotesize{$g(\eta(2))$}}} (A);
		
		\node[shape=circle,minimum size=0.63cm] (C) at (-6.,1.3) {};
		\node[shape=circle,minimum size=0.63cm] (OO) at (-5,1.3) {};
		\node[shape=circle,minimum size=0.63cm] (D) at (-4,1.3) {};
		\path [->] (OO) edge[bend left=60] node[above right] {{\footnotesize{$g(\eta(1))$}}} (D);

		\path [<-] (C) edge[bend left=60] node[above] {{\footnotesize{$\frac{\lambda}{N^\theta}g(\eta(1)$)}}} (OO);

		\node[shape=circle,minimum size=0.63cm] (OO0) at (-6,-0.4) {};
		\node[shape=circle,minimum size=0.63cm] (C0) at (-5,-0.4) {};
		
		\path [<-] (C0) edge[bend left=60] node[below ] {{\footnotesize{$\frac{\alpha}{N^\theta}$}}} (OO0);

		%particulas do lado direito

		\node[ball color=black!30!, shape=circle, minimum size=0.63cm]  at (6.,0.4) {};
		\node[ball color=black!30!, shape=circle, minimum size=0.63cm] at (6.,1.2) {};
		\node[ball color=black!30!, shape=circle, minimum size=0.63cm] at (6.,2) {};
		\node[shape=circle,minimum size=0.63cm] (E) at (6.,2.1) {};
		\node[shape=circle,minimum size=0.63cm] (F) at (4.8,2.1) {};
		%\node[shape=circle,minimum size=0.63cm] (G) at (6,3.0) {};
		%\node[shape=circle,minimum size=0.63cm] (H) at (4.8,3.0) {};
		\path [->] (E) edge[bend right =60] node[above left] {{\footnotesize{$g(\eta(N\!-\!1))$}}} (F);
		%\path [->] (H) edge[bend left=60] node[above] {{\footnotesize{$g(\eta(N\!-\!2))$}}} (G);
		
		\node[ball color=black!30!, shape=circle, minimum size=0.63cm]  at (5,1.2) {};
		\node[ball color=black!30!, shape=circle, minimum size=0.63cm]  at (5.,0.4) {};

		\node[shape=circle,minimum size=0.63cm] (H) at (7.2,2.1) {};
		\path [->] (E) edge[bend left=60] node[above] {
			{{\footnotesize{$\qquad\frac{\delta }{N^\theta}g(\eta(N\!-\!1))$}}}}(H);
		
		\node[shape=circle,minimum size=0.63cm] (H0) at (7.1,-0.4) {};
		\node[shape=circle,minimum size=0.63cm] (E0) at (6,-0.4) {};
		
		\path [->] (H0) edge[bend left=60] node[below ] {{\footnotesize{$\frac{\beta}{N^\theta}$}}} (E0);

		%particulas no meio
		\node[ball color=black!30!, shape=circle, minimum size=0.63cm] (O) at (-4,0.4) {};
		\node[ball color=black!30!, shape=circle, minimum size=0.63cm]  at (-3.,0.4) {};
		\node[ball color=black!30!, shape=circle, minimum size=0.63cm]  at (-3.,1.2) {};
%		\node[ball color=black!30!, shape=circle, minimum size=0.63cm]  at (-3.,2) {};
		
		\node[ball color=black!30!, shape=circle, minimum size=0.63cm]  at (-1.,0.4) {};
		\node[ball color=black!30!, shape=circle, minimum size=0.63cm]  at (-1.,1.2) {};
		
		\node[ball color=black!30!, shape=circle, minimum size=0.63cm]  at (0.,0.4) {};
		\node[ball color=black!30!, shape=circle, minimum size=0.63cm]  at (0.,1.2) {};
		\node[ball color=black!30!, shape=circle, minimum size=0.63cm]  at (0.,2) {};
		\node[ball color=black!30!, shape=circle, minimum size=0.63cm]  at (0.,2.8) {};
		
		\node[ball color=black!30!, shape=circle, minimum size=0.63cm]  at (1.,0.4) {};
		\node[ball color=black!30!, shape=circle, minimum size=0.63cm]  at (1.,1.2) {};
		\node[ball color=black!30!, shape=circle, minimum size=0.63cm]   at (1.,2) {};

		\node[ball color=black!30!, shape=circle, minimum size=0.63cm]  at (2.,0.4) {};
		
		\node[ball color=black!30!, shape=circle, minimum size=0.63cm]  at (4,0.4) {};

		%setas do meio

		\node[shape=circle,minimum size=0.63cm] (G1) at (0,3) {};
		\node[shape=circle,minimum size=0.63cm] (H1) at (1,2.8) {};
		\path [->] (G1) edge[bend left =60] node[above] {{\footnotesize{$\quad g(\eta(x))$}}} (H1);

		\node[shape=circle,minimum size=0.63cm] (J1) at (-1,2.8) {};
		\path [<-] (J1) edge[bend left =60] node[above] {{\footnotesize{$g(\eta(x))\quad$}}} (G1);
\end{tikzpicture}
		
		\caption{The general slow boundary driven zero-range process. }\label{fig.2}
	\end{center}
\end{figure}
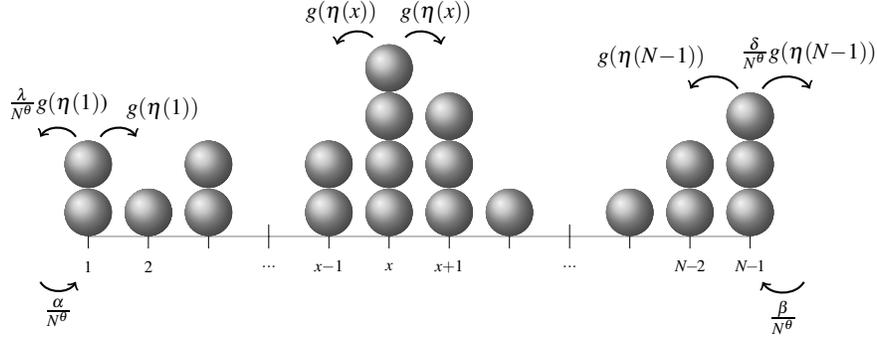	

\begin{equation*}%\label{generalfugacity}
\varphi(x)=\varphi^N(x)=\frac{-(\alpha\delta-\beta\lambda)(x-1)+\alpha\delta(N-2)+(\alpha+\beta)N^\theta}{\lambda\delta(N-2)+(\lambda+\delta)N^\theta}\,,
\end{equation*}
for $x\in I_N$ and $\alpha,\beta,\delta, \lambda,\theta\geq 0$.
In the case $\theta=0$, this formula 
%\eqref{generalfugacity}
 coincides with the one presented  in \cite{lms2005}.
All results obtained in the present work, including the deduction of the hydrodynamic limit (see Section \ref{B}), can be straightforward adapted to this general case. Nevertheless, in order to avoid too much notation, we choose $\delta=1$, $\lambda=\beta=0$. Also, since the  creation of particles at one side has an analogous effect to the creation of particles at the other side, and the same holds for annihilation, the case studied in the present paper captures the essence of the macroscopic effect of the boundary dynamics. Moreover, the dynamic presented in this work has a natural interpretation as a flux of particles from a reservoir at the left-hand side of the bulk toward the one at the right-hand side. 
 \end{remark}

%sugestao
\section{Hydrostatic limit}\label{sectionhydrostatic}
%\section{Measures associated to a profile}

\begin{definition}\label{associated}
	A sequence $\{\mu^N\}_{N\in \mathbb N}$ of probabilities on $\Omega_N$ is said to be \textit{associated} to the profile $\rho_0:[0,1]\to\bb R_+$ if,  for any $ \delta >0 $ and any continuous function $ H: [0,1]\to\bb R $ the following limit holds:
	\begin{equation}\label{initial_profile_integrable}
	\lim_{N\to\infty}
	\mu^{N} \Big[\, \eta \in \Omega_N:\, \Big| \frac{1}{N} \sum_{x = 1}^{N\!-\!1} H(\tfrac{x}{N})\, \eta(x)
	- \int_0^1 H(u)\, \rho_0(u)\, du \Big| > \delta\,\Big] \;=\; 0\,.
	\end{equation}
\end{definition}

Recall that $\varphi^*$ denotes the radius of convergence of the partition function $Z(\varphi)=\sum_{k\geq0}\frac{\varphi^k}{g(k)!}$. The average particle density corresponding to the fugacity $\varphi$ is a function $R:[0,\varphi^*)\to\bb R_+$, given by 
\begin{equation}\label{Rphi}
R(\varphi)=\frac{1}{Z(\varphi)}\sum_{k\geq0}k\frac{\varphi^k}{g(k)!}.
\end{equation}
As shown in \cite[Section 2.3]{kl}, $R$ is strictly increasing, and, if we assume that $$\lim_{\varphi\uparrow\varphi^*}Z(\varphi)=\infty,$$ then the range of $R$ is all $\bb R_+$, i.e, $\lim_{\varphi\uparrow\varphi^*}R(\varphi)=\infty$. Therefore, the inverse of $R$ is well defined.

\begin{equation} \label{PHI}
\mbox{Let }\Phi:\bb R_+ \to [0, \varphi^*)\mbox{ be the inverse function of }R.
\end{equation}

\begin{definition}\label{productassociated}For a continuous function $\rho_0:[0,1]\to \bb R_+$, denote by $\nu^N_{\rho_0(\cdot)}$ the product measure with slowly varying parameter associated to $\rho_0$, this is the product measure on $\Omega_N$ with marginals given by
	\begin{equation}\label{slow_varing}
	\nu^N_{\rho_0(\cdot)}\{\eta:\,\eta(x)=k\}=\frac{1}{Z(\Phi(\rho_0(\tfrac{x}{N})))}\frac{\Phi(\rho_0(\tfrac{x}{N}))^k}{g(k)!}\,,\quad\mbox{for }k\geq0\mbox{ and }x\in I_N\,.
	\end{equation}
\end{definition}
From \eqref{Rphi}, we have
\begin{equation}\label{Eeta}
E_{\nu^N_{\rho_0(\cdot)}}\left[\eta(x)\right]=\rho_0(\tfrac{x}{N})\,,\quad\mbox{for all }x\in I_N\,.
\end{equation} 

The sequence  $\{\nu^N_{\rho_0(\cdot)}\}_{N\in\bb N}$ is a particular case of a sequence of probabilities associated to the profile $\rho_0$ in the sense of Definition \ref{associated}, as stated in Proposition~\ref{measuresprofile}. To prove this, we begin with the following lemma.

\begin{lemma}\label{momentoslimitados} If $\rho_0:[0,1]\to\bb R_+$ is a continuous profile, then for each positive integer~$\ell$,
	$$\sup_{N}\sup_{x\in I_N}E_{\nu^N_{\rho_0(\cdot)}}[(\eta(x))^\ell]<\infty.$$
\end{lemma}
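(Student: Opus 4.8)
The plan is to reduce the statement to a uniform bound on a single deterministic function of the fugacity, and then close the argument by compactness. Under $\nu^N_{\rho_0(\cdot)}$ the coordinate $\eta(x)$ has, by \eqref{slow_varing}, the one-site law with fugacity $\varphi = \Phi(\rho_0(\tfrac{x}{N}))$, so that
\begin{equation*}
E_{\nu^N_{\rho_0(\cdot)}}\big[(\eta(x))^\ell\big] \;=\; m_\ell\big(\Phi(\rho_0(\tfrac{x}{N}))\big)\,,
\qquad\text{where}\qquad
m_\ell(\varphi) \;=\; \frac{1}{Z(\varphi)}\sum_{k\geq 0} k^\ell\,\frac{\varphi^k}{g(k)!}\,.
\end{equation*}
Thus it suffices to show that, as $x$ and $N$ vary, the fugacities $\Phi(\rho_0(\tfrac{x}{N}))$ stay inside a fixed compact subinterval of $[0,\varphi^*)$, and that $m_\ell$ is bounded on such an interval.

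For the first point I would use continuity and compactness. Since $\rho_0$ is continuous on the compact interval $[0,1]$, it is bounded, say $\rho_0([0,1])\subseteq[0,c]$ with $c=\max_{u\in[0,1]}\rho_0(u)$. Recall from \eqref{PHI} that $\Phi$ is the inverse of the strictly increasing function $R$ of \eqref{Rphi}; hence $\Phi$ is itself strictly increasing (and continuous) from $\bb R_+$ into $[0,\varphi^*)$. Consequently $\Phi(\rho_0(\tfrac{x}{N}))\leq \Phi(c)=:\varphi_c$ for every $x\in I_N$ and every $N$, and crucially $\varphi_c<\varphi^*$. This confines all the relevant fugacities to the compact interval $[0,\varphi_c]\subset[0,\varphi^*)$, uniformly in $x$ and $N$.

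It then remains to show that $m_\ell$ is bounded on $[0,\varphi_c]$, for which I would argue that $m_\ell$ is continuous on $[0,\varphi^*)$. The series $\sum_{k\geq 0} k^\ell\,\varphi^k/g(k)!$ is obtained from the power series \eqref{partition} by inserting the polynomial factor $k^\ell$; since $\lim_{k\to\infty}(k^\ell)^{1/k}=1$, this does not change the radius of convergence, so the numerator of $m_\ell$ is a power series with the same radius of convergence $\varphi^*$ and is therefore continuous on $[0,\varphi^*)$. The denominator $Z(\varphi)$ is likewise continuous and satisfies $Z(\varphi)\geq 1$ (the $k=0$ term), so it never vanishes; hence $m_\ell$ is continuous on $[0,\varphi^*)$. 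A continuous function is bounded on the compact set $[0,\varphi_c]$, and combining the two displays gives
\begin{equation*}
\sup_{N}\sup_{x\in I_N}E_{\nu^N_{\rho_0(\cdot)}}\big[(\eta(x))^\ell\big]
\;\leq\; \sup_{\varphi\in[0,\varphi_c]} m_\ell(\varphi) \;<\;\infty\,,
\end{equation*}
as required. The only delicate point, and the step I would state most carefully, is the invariance of the radius of convergence under multiplication by the polynomial weight $k^\ell$, which is what guarantees that $m_\ell$ does not blow up strictly inside $[0,\varphi^*)$; everything else is an application of continuity and the boundedness of $\rho_0$.
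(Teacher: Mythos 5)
Your proposal is correct and follows essentially the same route as the paper: reduce the moment to a deterministic function of the fugacity ($m_\ell$ is exactly the paper's $R_\ell$), use boundedness of $\rho_0$ and monotonicity of $\Phi$ to confine the fugacities to a compact subinterval $[0,\varphi_c]\subset[0,\varphi^*)$, and conclude by regularity of $m_\ell$ there. The only (harmless) variation is in the last step: the paper establishes analyticity of $R_\ell$ on $[0,\varphi^*)$ via the inductive identity $A_n(\varphi)=\varphi A'_{n-1}(\varphi)$, whereas you obtain continuity directly from the Cauchy--Hadamard observation that the polynomial weight $k^\ell$ does not change the radius of convergence, together with $Z(\varphi)\geq 1$ --- both arguments are valid and serve the same purpose.
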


\begin{proof}  
	First of all, note that  $E_{\nu^N_{\rho_0(\cdot)}}[(\eta(x))^\ell]=R_\ell(\Phi(\rho_0(\tfrac{x}{N})))$, where $R_\ell$ is defined for $\varphi\in [0,\varphi^*)$ as
	$R_\ell(\varphi)=\frac{1}{Z(\varphi)}\sum_{k\geq0}k^\ell\frac{\varphi^k}{g(k)!}$. The function $\Phi=R^{-1}$ is strictly increasing and $\lim_{v\to\infty}\Phi(v)=\varphi^*$. Denote by $\varphi^{**}=\sup_{u\in[0,1]}\Phi(\rho_0(u))$. Since $\rho_0$ is bounded, we have $\varphi^{**}<\varphi^*$.
	Therefore, 
	$$\sup_{N}\sup_{x\in I_N}E_{\nu^N_{\rho_0(\cdot)}}[(\eta(x))^\ell]=\sup_{N}\sup_{x\in I_N}R_\ell(\Phi(\rho_0(x/N)))\leq\sup_{0\leq\varphi\leq\varphi^{**}}R_\ell(\varphi).$$
	In order to conclude that the last expression above is finite we observe that
	the function $R_\ell$ is analytic on $[0,\varphi^*)$. To see this, we write $R_\ell(\varphi)=\frac{A_\ell(\varphi)}{Z(\varphi)}$, where $A_\ell$ is defined inductively by $A_0(\varphi)=Z(\varphi)$ and $A_n(\varphi)=\varphi A'_{n-1}(\varphi)$.
\end{proof}

\begin{proposition}\label{measuresprofile}
	If $\rho_0:[0,1]\to\bb R_+$ is continuous, then the product measure $\nu^N_{\rho_0(\cdot)}$ defined in \eqref{slow_varing} is associated to the profile $\rho_0$ in the sense of Definition \ref{associated}.
\end{proposition}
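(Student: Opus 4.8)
The plan is to establish the convergence in probability required by Definition \ref{associated} via a standard second-moment argument, exploiting that $\nu^N_{\rho_0(\cdot)}$ is a \emph{product} measure, so the occupation variables $\{\eta(x)\}_{x\in I_N}$ are independent. Fix a continuous $H:[0,1]\to\bb R$ and a threshold $\delta>0$, and abbreviate $X_N=\frac1N\sum_{x=1}^{N-1}H(\tfrac xN)\,\eta(x)$ and $I=\int_0^1 H(u)\rho_0(u)\,du$. I would prove that $X_N\to I$ in $\nu^N_{\rho_0(\cdot)}$-probability by controlling separately the mean of $X_N$ and its fluctuations around the mean.

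First I would identify the mean as a Riemann sum. By linearity of expectation and \eqref{Eeta},
\[
m_N:=E_{\nu^N_{\rho_0(\cdot)}}[X_N]=\frac1N\sum_{x=1}^{N-1}H(\tfrac xN)\,\rho_0(\tfrac xN).
\]
Since $H$ and $\rho_0$ are continuous on the compact interval $[0,1]$, their product is continuous and hence Riemann integrable, so $m_N\to I$ as $N\to\infty$. In particular, for all $N$ sufficiently large one has $|m_N-I|<\delta/2$. Next, using independence of the $\eta(x)$ under the product measure,
\[
\mathrm{Var}_{\nu^N_{\rho_0(\cdot)}}(X_N)=\frac{1}{N^2}\sum_{x=1}^{N-1}H(\tfrac xN)^2\,\mathrm{Var}_{\nu^N_{\rho_0(\cdot)}}(\eta(x)).
\]
Here $H$ is bounded on $[0,1]$, say $|H|\le \|H\|_\infty$, and Lemma \ref{momentoslimitados} applied with $\ell=2$ furnishes a finite constant $C=\sup_N\sup_{x\in I_N}E_{\nu^N_{\rho_0(\cdot)}}[(\eta(x))^2]$ that bounds every variance uniformly in $x$ and $N$. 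Consequently $\mathrm{Var}_{\nu^N_{\rho_0(\cdot)}}(X_N)\le C\,\|H\|_\infty^2/N\to 0$.

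To conclude I would combine these two estimates through the triangle inequality and Chebyshev's inequality: for $N$ large enough that $|m_N-I|<\delta/2$,
\[
\nu^N_{\rho_0(\cdot)}\big[\,|X_N-I|>\delta\,\big]\;\le\;\nu^N_{\rho_0(\cdot)}\big[\,|X_N-m_N|>\tfrac\delta2\,\big]\;\le\;\frac{4\,\mathrm{Var}_{\nu^N_{\rho_0(\cdot)}}(X_N)}{\delta^2}\;\le\;\frac{4\,C\,\|H\|_\infty^2}{\delta^2\,N},
\]
and the right-hand side vanishes as $N\to\infty$, which is exactly \eqref{initial_profile_integrable}.

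This is, at bottom, the weak law of large numbers for independent, non-identically distributed, uniformly $L^2$-bounded summands, so I do not anticipate a serious obstacle. The only genuinely model-specific input is the uniform-in-$N$ control of the second moments, which is precisely the content of Lemma \ref{momentoslimitados}; I regard this as the real (and already resolved) crux, since without such a bound the $1/N$ gain coming from independence could be overwhelmed and the variance need not vanish. Everything else in the argument is routine.
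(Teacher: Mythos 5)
Your proof is correct and follows essentially the same route as the paper's: both reduce the claim to the Riemann-sum convergence of the mean plus a Chebyshev bound on the fluctuations, using independence under the product measure and the uniform second-moment bound of Lemma \ref{momentoslimitados}. The only cosmetic difference is that the paper centers the sum at $\rho_0(\tfrac{x}{N})$ directly and bounds $E_{\nu^N_{\rho_0(\cdot)}}\bigl[(\eta(x)-\rho_0(\tfrac{x}{N}))^2\bigr]$ by $E_{\nu^N_{\rho_0(\cdot)}}\bigl[\eta(x)^2\bigr]$, whereas you split off the mean $m_N$ with a triangle inequality; these are the same argument.
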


\begin{proof} Fix a continuous test function $H$. Observing that
	$$\frac{1}{N}\sum_{x = 1}^{N\!-\!1} H(\tfrac{x}{N})\, \rho_0(\tfrac{x}{N})\to\int_0^1H(u)\rho_0(u)du,$$
	it is enough to show that, for each $\delta>0$,
	\begin{equation}\label{equivalenceofdefinitions}
	\nu^N_{\rho_0(\cdot)}\left[\eta:\,\,\left|\frac{1}{N}\sum_{x = 1}^{N\!-\!1} H(\tfrac{x}{N})[\eta(x)-\rho_0(\tfrac{x}{N})]\right|>\delta\right]
	\end{equation}
	goes to zero as $N\to\infty$. By Chebyshev's inequality, \eqref{Eeta} and independence, the expression in \eqref{equivalenceofdefinitions} is bounded above by
	$$\frac{1}{\delta^2}\frac{1}{N^2}\sum_{x=1}^{N-1}H^2(\tfrac{x}{N})E_{\nu^N_{\rho_0(\cdot)}}\left[(\eta(x)-\rho_0(\tfrac{x}{N}))^2\right]\leq \frac{1}{\delta^2}\frac{1}{N^2}\sum_{x=1}^{N-1}H^2(\tfrac{x}{N})E_{\nu^N_{\rho_0(\cdot)}}\left[\eta(x)^2\right].$$
	By Lemma \ref{momentoslimitados} and since $H$ is bounded, there exists some constant $C$ such that $H^2(\tfrac{x}{N})E_{\nu^N_{\rho_0(\cdot)}}\left[\eta(x)^2\right]<C$ for every $N$ and $x\in I_N$. Therefore, the right-hand side of the last displayed inequality goes to $0$ when $N\to\infty$.
\end{proof}

Since we have the explicit formula for the fugacity profile of the invariant measure $\bar \nu^N$, it is straightforward to obtain, in terms of the function $R$, an expression for the stationary density profile $\bar\rho:[0,1]\to\bb R_+$. Such result is usually called \textit{hydrostatic limit}. Recalling Remark \ref{asymptoticprofile}, note that, when $\theta=1$, the invariant measure $\bar\nu^N$ satisfies 
\begin{equation}
\bar\nu^N\{\eta:\eta(x)=k\}=\nu_{\bar\rho(\cdot)}^N\{\eta:\eta(x+1)=k\},
\end{equation}
where $\bar\rho(u)=R(\alpha(2-u))$. Also, when $\theta>1$, $\varphi^N(x)-\alpha$ goes to zero uniformly in $x\in I_N$, as $N\to\infty$. Therefore, the next result is derived following the lines of the proof of Proposition \ref{measuresprofile}.

\begin{proposition}[Hydrostatic Limit]\label{hydrostaticlimit}
Let $\bar\nu^N$ be the invariant measure in $\Omega_N$ for the Markov process with infinitesimal generator $L_N$. Then the sequence $\bar\nu^N$ is associated to the profile $\bar\rho:[0,1]\to \bb R_+$ given by 
\begin{equation}\label{hydrostaticprofile}
\bar\rho(u)=\begin{cases}
R(\alpha(2-u)), &\text{ if } \theta=1,\\
R(\alpha), &\text{ if } \theta>1, 
\end{cases}
\end{equation} 
for all $u\in[0,1]$.
\end{proposition}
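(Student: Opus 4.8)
The plan is to mimic the proof of Proposition~\ref{measuresprofile}, exploiting that $\bar\nu^N$ is a product measure whose marginals are known explicitly from Lemma~\ref{invariantmeasurelemma}. Fix a continuous test function $H$ and $\delta>0$. By Chebyshev's inequality together with the independence of the coordinates under $\bar\nu^N$, the probability appearing in Definition~\ref{associated} is controlled once we verify two facts: that the mean $\frac{1}{N}\sum_{x=1}^{N-1}H(\tfrac{x}{N})\,E_{\bar\nu^N}[\eta(x)]$ converges to $\int_0^1 H(u)\,\bar\rho(u)\,du$, and that the variance $\frac{1}{N^2}\sum_{x=1}^{N-1}H^2(\tfrac{x}{N})\,\mathrm{Var}_{\bar\nu^N}(\eta(x))$ vanishes as $N\to\infty$. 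The key observation is that both quantities are completely governed by the fugacity profile $\varphi^N$ in \eqref{varphi}.

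For the mean, recall from \eqref{Rphi} and the product form \eqref{invariant} that $E_{\bar\nu^N}[\eta(x)]=R(\varphi^N(x))$ for every $x\in I_N$. When $\theta=1$ one has $\varphi^N(x)=\alpha(2-\tfrac{x+1}{N})$, hence $E_{\bar\nu^N}[\eta(x)]=R(\alpha(2-\tfrac{x+1}{N}))=\bar\rho(\tfrac{x+1}{N})$; the mean is then a Riemann sum which, after absorbing the harmless shift of one lattice spacing (legitimate since $H$ and $\bar\rho$ are uniformly continuous on $[0,1]$), converges to $\int_0^1 H(u)\,\bar\rho(u)\,du$. When $\theta>1$, Remark~\ref{asymptoticprofile} gives $\varphi^N(x)=\alpha+r_N(x)$ with $\sup_{x\in I_N}|r_N(x)|\to0$; since $R$ is continuous, $R(\varphi^N(x))\to R(\alpha)=\bar\rho$ uniformly in $x$, and the mean converges to $\bar\rho\int_0^1 H(u)\,du=\int_0^1 H(u)\,\bar\rho(u)\,du$.

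For the variance, the product structure reduces the task to a uniform second-moment bound, that is, an analogue of Lemma~\ref{momentoslimitados} with $\bar\nu^N$ in place of $\nu^N_{\rho_0(\cdot)}$. That bound follows verbatim from the argument there, once the fugacities are eventually bounded away from $\varphi^*$, i.e. $\sup_{x\in I_N}\varphi^N(x)<\varphi^*$ uniformly for $N$ large. This is precisely where the value of $\theta$ enters: from \eqref{varphi} one has $\varphi^N(x)\le\varphi^N(1)$, and $\varphi^N(1)$ tends to $2\alpha$ when $\theta=1$ and to $\alpha$ when $\theta>1$, so the required bound holds under the standing assumption that $\bar\rho$ be well defined on $[0,1]$ (namely $2\alpha<\varphi^*$ if $\theta=1$, and $\alpha<\varphi^*$ if $\theta>1$). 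Granting it, $\mathrm{Var}_{\bar\nu^N}(\eta(x))\le E_{\bar\nu^N}[\eta(x)^2]$ is bounded uniformly in $N$ and $x$, so $\frac{1}{N^2}\sum_{x=1}^{N-1}H^2(\tfrac{x}{N})\,\mathrm{Var}_{\bar\nu^N}(\eta(x))=O(1/N)\to0$.

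The only genuinely new ingredient compared with Proposition~\ref{measuresprofile} is the treatment of the mean, and I expect this to be the main (though modest) obstacle: one must identify $E_{\bar\nu^N}[\eta(x)]$ with the sampled profile $\bar\rho$ and check that the resulting Riemann sum is insensitive both to the one-site shift (case $\theta=1$) and to the vanishing perturbation $r_N$ of the fugacity (case $\theta>1$). Once the uniform moment bound secures the vanishing of the variance, Chebyshev's inequality closes the argument in both regimes, yielding \eqref{hydrostaticprofile}.
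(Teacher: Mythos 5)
Your proof is correct and takes essentially the same route as the paper, which obtains the hydrostatic limit by noting that for $\theta=1$ the invariant measure is exactly the slowly varying product measure $\nu^N_{\bar\rho(\cdot)}$ up to a one-site shift, while for $\theta>1$ the fugacity $\varphi^N$ converges uniformly to $\alpha$, and then declaring the result ``derived following the lines of the proof of Proposition~\ref{measuresprofile}.'' Your Chebyshev-plus-uniform-second-moment argument, with the explicit treatment of the one-site shift and the perturbation $r_N$ in the mean, is precisely the expansion of that sketch.
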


Notice that the linear fugacity profile does not imply a linear density profile, except in the special case of non-interacting particles where $g(k)=k$.

\section{Attractiveness}\label{sectionattractiveness}
This small section is devoted to recall the essential property of attractiveness for the zero-range process. 

Consider in $\Omega_N$ the partial order: $\eta\leq \xi$ if and only if $\eta(x)\leq \xi(x)$ for every $x\in I_N$. A function $f:\Omega_N\to\bb R$ is called monotone if $f(\eta)\leq f(\xi)$ for all $\eta\leq\xi$. This partial order extends to measures on $\Omega_N$. We say that
\begin{equation}\label{defdomination}
\mu_1\leq\mu_2,  \quad\text{ if }\quad \int f d\mu_1\leq \int f d\mu_2,
\end{equation}
for all monotone functions $f:\Omega_N\to\bb R$.

An interacting particle system $\{\eta_t\}_{t\geq0}$ is said to be attractive if its semigroup $S(t)$, defined by $S(t)f(\eta)=\Exp{\eta}{}{f(\eta_t)}$, preserves the partial order: 
$$\mu_1\leq\mu_2\quad\Rightarrow\quad \mu_1S(t)\leq \mu_2S(t),$$
for all $t\geq0$. Here $\Exp{\eta}{}{f(\eta_t)}$ stands for the expectation of $f(\eta_t)$ when the process starts at $\eta(0)=\eta$.

It is well known, see \cite[Theorem 2.5.2]{kl}, that the zero-range process is attractive if $g$ is non decreasing.

\section{Tightness}\label{tight}

Let us denote by $\{\eta_t=\eta_t^N:\, t\geq0\}$ the continuous-time Markov process on $\Omega_N$ with generator $N^2L_N$. Let $\mc M_+$ be the space of positive measures on $[0,1]$ endowed with the weak topology, and denote by $\pi^N:\Omega_N\to\mc M_+$ the function that associates to each configuration $\eta$ the measure obtained by assigning mass $1/ N$ to each particle:
\begin{equation*}%\label{empirical_measure}
\pi^N(\eta,du) =\frac{1}{N}\sum_{x=1}^{N-1}\eta(x)\delta_{\frac{x}{N}}(du),
\end{equation*}
where $\delta_u$ denotes the Dirac mass at $u$. 
Thus, the empirical process $\pi^N(\eta_t)$ is a Markov process in the space $\mc M_+$. By abuse of notation, in this section we will simply write $\pi^N_t$instead of $\pi^N(\eta_t)$.
For a function $G:[0,1]\to\bb R$, we denote by $\langle \pi^N_t, G\rangle$ the integral of $G$ with respect to the measure $\pi^N_t$:
$$\langle \pi^N_t, G\rangle=\frac{1}{N}\sum_{x\in I_N}G(\tfrac{x}{N})\eta_t(x).$$

For a measure $\mu^N$ on $\Omega^N$ we denote by $\bb P_{\mu^N}$ the probability on $\mathcal D([0,T], \Omega_N)$, the Skorohod space of c\`adl\`ag trajectories, corresponding to the jump process $\{\eta_t:\, t\geq0\}$ with generator  $N^2L_N$ and initial distribution $\mu^N$. Expectations with respect to $\bb P_{\mu^N}$ will be denoted by $\bb E_{\mu^N}$. We denote by $Q^N$ the probability on $\mathcal D([0,T], \mc M_+)$ defined by $Q^N=\bb P_{\mu^N}(\pi^N)^{-1}$.

In the next proposition we state the tightness of the sequence $\{Q^N\}_{N\geq 0}$ under the hypothesis 
\begin{equation}\label{gnondecrasing}
g(\cdot) \text{ is non decreasing},
\end{equation}
which implies attractiveness of the process.

The conservation of particles is an extensively used property in the proof of tightness for the classical zero-range process in the torus, together with a hypothesis that controls the relative entropy of the initial distribution $\mu^N$ with respect to the invariant measure; see \cite[Lemma 5.1.5]{kl}. Since we do not have conservation in our case, a different approach is necessary. Instead of a relative entropy hypothesis, we assume 
\begin{equation}\label{limitation}
\mu^N\leq\bar\nu^N,
\end{equation}
in the sense of \eqref{defdomination}, where $\bar\nu^N$ is the invariant measure. Hypothesis \eqref{limitation}, along with attractiveness, provide us a way to control the number of particles in the system, as time evolves. 

As a consequence of \cite[Lemma 2.3.5]{kl} the limitation \eqref{limitation} holds if, for instance, $\mu^N$ is a product measure of the form \eqref{invariant} associated to a fugacity function bounded above by the fugacity of the stationary measure obtained in \eqref{varphi}.

Tightness of the sequence $\{Q^N\}_{N\geq 0}$ is also true if we require that the function $g$ is bounded, instead of the hypothesis \eqref{gnondecrasing} and \eqref{limitation}. See Remark \ref{trocaatratividadeporlimitacao} for more details.

\begin{proposition}\label{proposition_tightness} Let us consider $\theta\geq 1$. Suppose that the rate function $g$ satisfies \eqref{gnondecrasing}. Assume that the sequence $\{\mu^N\}_{N\in \bb N}$ is associated to an integrable initial profile $\rho_0:[0,1]\to\bb R_+$, in the sense of \eqref{initial_profile_integrable} and satisfies~\eqref{limitation}. Then the sequence of measures $\{Q^N\}_{N\geq0}$ is tight. 
\end{proposition}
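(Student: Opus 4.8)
The standard route to tightness of empirical measure processes is Aldous' criterion, applied to the real-valued processes $\langle \pi^N_t, G\rangle$ for $G$ in a suitable dense class of test functions. Concretely, the plan is to prove tightness of $\{Q^N\}$ in $\mathcal D([0,T],\mc M_+)$ by showing that for every $G$ in a countable dense subset of $C([0,1])$ (say smooth functions, or functions vanishing appropriately at the boundary), the sequence of real processes $\{\langle \pi^N_t, G\rangle : t\in[0,T]\}$ is tight in $\mathcal D([0,T],\bb R)$. By the standard reduction (see \cite[Chapter~4]{kl}), tightness in the product topology on test functions, together with a uniform control of the total mass, yields tightness of the measure-valued process. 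So the first step is to reduce the problem to a fixed $G$ and set up Aldous' criterion, which requires (i) control of $\langle\pi^N_t,G\rangle$ at a fixed time, and (ii) an estimate on the increments over small time intervals, uniformly over stopping times.

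The main tool is the Dynkin martingale. For a smooth $G$, write
\begin{equation*}
M^{N,G}_t \;=\; \langle \pi^N_t, G\rangle - \langle \pi^N_0, G\rangle - \int_0^t N^2 L_N\langle \pi^N_s, G\rangle\, ds\,,
\end{equation*}
together with its quadratic variation $\langle M^{N,G}\rangle_t$ obtained from the carr\'e du champ. Computing $N^2 L_N \langle \pi^N_s,G\rangle$ using \eqref{jumprate1}--\eqref{jumprate2}, the bulk part produces a discrete Laplacian of $G$ paired against $g(\eta_s(x))$, i.e.\ a term of order $\tfrac1N\sum_x (\Delta_N G)(\tfrac xN)\,g(\eta_s(x))$, while the boundary terms carry the prefactor $N^{2-\theta}$ times quantities like $\alpha$ and $g(\eta_s(N-1))$. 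The quadratic variation will similarly be a sum of bulk contributions of order $N^{-1}$ (from the $N^2$ speed-up against the $N^{-2}$ mass-squared) and boundary contributions. The crucial point is that \emph{all} of these expressions are linear in $g(\eta_s(x))$, and hence it suffices to bound expectations of the form $\bb E_{\mu^N}[g(\eta_s(x))]$ and $\bb E_{\mu^N}[g(\eta_s(x))^2]$ uniformly in $s\in[0,T]$, $x\in I_N$ and $N$.

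Here is where attractiveness and the domination hypothesis \eqref{limitation} enter, and this is the step I expect to be the main obstacle. Because the process is attractive (by \eqref{gnondecrasing} and Section~\ref{sectionattractiveness}) and $\mu^N\leq\bar\nu^N$, the distribution $\mu^N S(t)$ of $\eta_t$ stays below $\bar\nu^N S(t)=\bar\nu^N$ for all $t$, since $\bar\nu^N$ is invariant; as $g$ is non decreasing, $\eta\mapsto g(\eta(x))$ and $\eta\mapsto g(\eta(x))^2$ are monotone, so their expectations under $\mu^N S(t)$ are bounded by their expectations under $\bar\nu^N$. These stationary expectations are computed explicitly from the product structure \eqref{invariant}: $E_{\bar\nu^N}[g(\eta(x))]=\varphi^N(x)$, and $E_{\bar\nu^N}[g(\eta(x))^2]$ is a similar finite quantity, both uniformly bounded in $x$ and $N$ once $\varphi^N(x)<\varphi^*$ is guaranteed (the content of the hypothesis in Lemma~\ref{invariantmeasurelemma}, and Lemma~\ref{momentoslimitados}-type moment bounds). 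This gives time-uniform, $N$-uniform $L^1$ and $L^2$ control of $g(\eta_s(x))$, which is exactly what the martingale estimates need. The delicate part is handling the boundary terms: their prefactor $N^{2-\theta}$ is \emph{not} small when $\theta=1$ (it equals $N$), but those terms involve only the two boundary sites, so each carries a compensating factor $\tfrac1N$ from the definition of $\pi^N$ (and an extra $\tfrac1N$ in the quadratic variation); combined with the uniform bound on $g(\eta_s(N-1))$ one checks they remain $O(1)$ in the generator estimate and $o(1)$ in the quadratic variation for all $\theta\geq1$.

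With these ingredients the two Aldous conditions follow in the usual way: the fixed-time estimate comes from $\bb E_{\mu^N}|\langle\pi^N_t,G\rangle|\le \tfrac1N\sum_x|G(\tfrac xN)|\,E_{\bar\nu^N}[\eta(x)]$, bounded uniformly by monotonicity and Lemma~\ref{momentoslimitados}; and the increment estimate follows by writing $\langle\pi^N_{\tau+\gamma},G\rangle-\langle\pi^N_\tau,G\rangle$ as a martingale increment plus the integral term, applying Chebyshev together with the optional-stopping/It\^o-isometry bound $\bb E_{\mu^N}[(M^{N,G}_{\tau+\gamma}-M^{N,G}_\tau)^2]=\bb E_{\mu^N}[\langle M^{N,G}\rangle_{\tau+\gamma}-\langle M^{N,G}\rangle_\tau]$, and using that both the integrand of the drift and the quadratic variation rate are uniformly integrable by the above bounds, so that the increments are $O(\gamma)$ uniformly over stopping times $\tau\le T$. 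This verifies Aldous' criterion for each $G$, and assembling over a dense countable family of test functions, together with a uniform bound on the expected total mass $\bb E_{\mu^N}[\pi^N_t([0,1])]$ to guarantee that limit points live in $\mc M_+$, yields tightness of $\{Q^N\}_{N\ge0}$.
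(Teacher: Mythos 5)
Your plan reproduces the paper's proof in all essentials: the reduction to the real processes $\langle \pi^N_t,G\rangle$, $G\in C^2([0,1])$, via \cite[Proposition 4.1.7]{kl} and Aldous' criterion; the Dynkin martingale \eqref{MGt} with quadratic variation rate \eqref{BN}; and, as the key input, the attractiveness/domination bounds $\bb E_{\mu^N}[g(\eta_s(x))^\ell]\leq E_{\bar\nu^N}[g(\eta(x))^\ell]$ for $\ell=1,2$, computed explicitly from the product structure of $\bar\nu^N$ --- this is exactly the paper's Lemma \ref{tecnico}, and your accounting of the boundary prefactors $N^{2-\theta}$ (order one in the drift, $O(N^{-\theta})$ in the quadratic variation) and your Chebyshev/isometry treatment of the martingale increment match the paper's \ref{condition2.1} and \ref{condition2.2}, including the need for second moments of $g(\eta_s(x))$ to control $\int_\tau^{\tau+\omega}$ over stopping times via Cauchy--Schwarz with the indicator $\mb 1_{[\tau,\tau+\omega]}$.

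The one genuine divergence is the fixed-time mass bound (\ref{condition1}) and, relatedly, the bulk drift term. You propose to dominate $\bb E_{\mu^N}[\eta_t(x)]$ by the stationary first moments $E_{\bar\nu^N}[\eta(x)]=R(\varphi^N(x))$, using that $\eta\mapsto\eta(x)$ is monotone. The paper instead uses the pathwise bound $\sum_x\eta_t(x)\leq\sum_x\eta_0(x)+Y_t$, where $Y_t$ is the Poisson process of particles created at site $1$, of intensity $\alpha N^{2-\theta}$, so that $\bb E_{\mu^N}[Y_t]/N\leq \alpha t$ precisely when $\theta\geq1$; the initial mass is then controlled by the assumed association of $\mu^N$ to the \emph{integrable} profile $\rho_0$ --- this is where that hypothesis is actually used, whereas in your scheme it plays no role. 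The same Poisson bound, combined only with the linear growth \eqref{gmostlinear}, also disposes of the bulk drift term in \eqref{c2.1.1}, so in the paper attractiveness is needed only at the boundary sites and in the quadratic variation. Be aware that your route carries a hidden extra requirement: uniform finiteness of $\sup_N\sup_{x}R(\varphi^N(x))$, i.e., that the fugacities \eqref{varphi} stay uniformly away from $\varphi^*$. The standing assumption of Lemma \ref{invariantmeasurelemma} only gives $\varphi^N(x)<\varphi^*$ for each fixed $N$; for $\theta=1$ one has $\varphi^N(1)=\alpha(2-\tfrac2N)\uparrow2\alpha$, and in the borderline case $2\alpha=\varphi^*$ (with $\lim_{\varphi\uparrow\varphi^*}R(\varphi)=\infty$) your fixed-time estimate degenerates, while the paper's Poisson argument is untouched --- note also that Lemma \ref{tecnico} itself is immune to this, since $E_{\bar\nu^N}[g(\eta(x))]=\varphi^N(x)$ and $E_{\bar\nu^N}[g(\eta(x))^2]\leq g^*\varphi^N(x)+\varphi^N(x)^2$ are bounded in terms of the fugacity alone, with no appearance of $R$ or $Z$. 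Under the implicit normalization $2\alpha<\varphi^*$ both arguments are complete, so this is a robustness difference you should make explicit rather than an outright error.
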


\begin{remark}\label{remark_perfil_inicial}
	Because of assumption \eqref{limitation}, the profile $\rho_0$ in the above proposition needs to be bounded above by the profile $\bar\rho$ given in \eqref{hydrostaticprofile}. A natural sequence $\{\mu^N\}_{N\in \bb N}$ satisfying the hypothesis is the sequence $\nu^N_{\rho_0(\cdot)}$ of product measures with slowly varying parameter associated to a profile $\rho_0:[0,1]\to\bb R_+$, such that $\rho_0(u)+\varepsilon\leq\bar\rho(u)$ for all $u\in[0,1]$, for some $\varepsilon>0$. 
\end{remark}

Proof of Proposition \ref{proposition_tightness} will be postponed to Subsection \ref{prooftightness}. We will introduce now the related martingales of the process studied in this work, which will be very important not only in tightness as in the whole proof of hydrodynamic limit as well.
	
\subsection{Related Martingales}\label{martingale_section}
For  $G\in C^2[0,1]$, the set of twice continuously differentiable functions in $[0,1]$, the process $M_t^G$, defined as 
\begin{equation}\label{MGt}
M_t^G=\left\langle \pi^N_t,G \right\rangle-\left\langle \pi^N_0,G \right\rangle-\int_0^tN^2L_N\left\langle \pi^N_s,G \right\rangle ds,
\end{equation}
is a martingale. Recalling the definition of the generator \eqref{generator}, we write
\begin{align}\label{N2LN}
N^2L_N\left\langle \pi^N_s,G\right\rangle  =& \frac{1}{N}\sum_{x=2}^{N-2}g(\eta_s(x))\Delta_N G\left(\tfrac{x}{N}\right) \\
&+  g(\eta_s(1))\nabla^+_N G\left(\tfrac{1}{N}\right)-  g(\eta_s(N-1))\nabla^-_NG\left(\tfrac{N-1}{N}\right)\nonumber\\
&+ \frac{\alpha}{N^{\theta-1}} G\left(\tfrac{1}{N}\right)-\frac{g(\eta_{s}(N-1))}{N^{\theta-1}}G\left(\tfrac{N-1}{N}\right)\nonumber,
\end{align}
where
\begin{align}
\Delta_NG\left(\tfrac{x}{N}\right)  &=  N^2\left[G\left(\tfrac{x+1}{N}\right)+G\left(\tfrac{x-1}{N}\right)-2G\left(\tfrac{x}{N}\right)\right],\nonumber\\
\nabla^+_NG(\tfrac{x}{N}) &= N\left[G(\tfrac{x+1}{N})-G(\tfrac{x}{N})\right]\label{discreto},\\
\nabla^-_NG(\tfrac{x}{N}) &=  N\left[G(\tfrac{x}{N})-G(\tfrac{x-1}{N})\right]\nonumber.
\end{align}

The quadratic variation of the martingale $M_t^G$ is
\begin{equation}\label{quadraticvariation}
\left\langle M^G\right\rangle_t=\int_0^t\left[N^2L_N\left\langle \pi^N_s,G\right\rangle^2-2N^2\left\langle \pi^N_s,G\right\rangle L_N\left\langle \pi^N_s,G\right\rangle\right]ds.
\end{equation}
After standard calculations we can see that $\left\langle M^G\right\rangle_t=\int_0^tB^N(s)ds$, where
\begin{align}\label{BN}
B^N(s)  = & \sum_{x=1}^{N-1}\sum_{y\in\{x-1,x+1\}\cap I_N}g(\eta_s(x))[G(\tfrac{y}{N})-G(\tfrac{x}{N})]^2 \\
& + \tfrac{\alpha}{N^\theta}G^2(\tfrac{1}{N})+\tfrac{g(\eta_{s}(N-1))}{N^\theta}G^2(\tfrac{N-1}{N}).\nonumber
\end{align}

%\subsection{Proof of tightness}\label{prooftightness}
\subsection{Proof of Tightness}\label{prooftightness}
By \cite[Proposition 4.1.7]{kl}, to prove Proposition \ref{proposition_tightness} it is sufficient to show the tightness of the measures corresponding to the real processes $\langle \pi^N_t, G\rangle$ for every $G$ in $C^2([0,1])$. By Aldous criterion, is therefore sufficient to show that the following conditions are satisfied:  

\begin{description}
	\item[Condition 1\label{condition1}] For every $t\in[0,T]$,
	$$\lim_{A\to\infty}\limsup_{N\to\infty}\Prob{\mu^N}{}{\frac{1}{N}\sum_{x=1}^{N-1}\eta_t(x)\geq A}=0.$$
	\item[Condition 2\label{condition2}] For every $\delta >0$,
	$$\lim_{\gamma\to0}\limsup_{N\to\infty}\sup_{\tau \in \mathfrak T_T\atop \omega \leq \gamma}\Prob{\mu^N}{}{\left|\frac{1}{N}\sum_{x=1}^{N-1}G(\tfrac{x}{N})\eta_{\tau+\omega}(x)-\frac{1}{N}\sum_{x=1}^{N-1}G(\tfrac{x}{N})\eta_{\tau}(x)\right|>\delta}=0,$$
	where $\mathfrak T_T$ is the family of all stopping times bounded by $T$.
\end{description}

\subsubsection*{Proof of \ref{condition1}} For $\eta\in \mc D([0,T], \Omega_N)$ define
\begin{equation}
Y_t(\eta)=\text{number of particles created up to time }t.
\end{equation}
We have the following natural bound
\begin{equation}\label{boundnumerofparticles}
\sum_{x=1}^{N-1}\eta_t(x)\leq \sum_{x=1}^{N-1}\eta_0(x)+Y_t,
\end{equation}
and then
\begin{align*}
\Prob{\mu^N}{}{\frac{1}{N}\sum_{x=1}^{N-1}\eta_t(x)\geq A}&\leq \Prob{\mu^N}{}{\frac{1}{N}\sum_{x=1}^{N-1}\eta_0(x)\geq \frac{A}{2}}+\Prob{\mu^N}{}{\frac{1}{N}Y_t\geq\frac{A}{2}}\\
&=: A_N+B_N.
\end{align*}
Not that $\lim_{A\to\infty}\limsup_{N\to\infty}A_N=0$, since $\mu^N$ is associated to and integrable profile $\rho_0$. 
On the other hand, since the process is accelerated by $N^2$, under $\bb P_{\mu^N}$,  $Y_t$ is a Poisson process with intensity $N^{2-\theta}\alpha$, and then
$$B_N\leq\frac{2}{AN}\Exp{\mu^N}{}{Y_t}=\frac{2}{AN}\cdot N^{2-\theta}\alpha t\leq\frac{2\alpha t}{A},$$
which goes to zero when $A\to\infty$.

\subsubsection*{Proof of \ref{condition2}} By \eqref{MGt}, it is enough to show that
\begin{description}
	\item[Condition 2.1\label{condition2.1}] For every $\delta>0$,
	$$\lim_{\gamma\to0}\limsup_{N\to\infty}\sup_{\tau \in \mathfrak T_T\atop \omega \leq \gamma}\Prob{\mu^N}{}{\left|\int_{\tau}^{\tau+\omega}N^2L_N\langle\pi^N_s,G\rangle ds\right|>\delta}=0.$$
	\item[Condition 2.2\label{condition2.2}] For every $\delta>0$,
	$$\lim_{\gamma\to0}\limsup_{N\to\infty}\sup_{\tau \in \mathfrak T_T\atop \omega \leq \gamma}\Prob{\mu^N}{}{\left|M_{\tau+\omega}^G-M_{\tau}^G\right|>\delta}=0.$$
\end{description}
By \eqref{N2LN}, to show \ref{condition2.1} it is sufficient to show that, for all $\delta>0$

\begin{align} \lim_{\gamma\to0}\limsup_{N\to\infty}\sup_{\tau \in \mathfrak T_T\atop \omega \leq \gamma}\Prob{\mu^N}{}{\left|\int_{\tau}^{\tau+\omega}\frac{1}{N}\sum_{x=2}^{N-2}g(\eta_s(x))\Delta_NG(\tfrac{x}{N})ds\right|>\delta} &= 0\label{c2.1.1}\\
\lim_{\gamma\to0}\limsup_{N\to\infty}\sup_{\tau \in \mathfrak T_T\atop \omega \leq \gamma}\Prob{\mu^N}{}{\left|\int_{\tau}^{\tau+\omega}g(\eta_s(1))\nabla^+_NG(\tfrac{1}{N})ds\right|>\delta} &= 0\label{c2.1.2}\\
\lim_{\gamma\to0}\limsup_{N\to\infty}\sup_{\tau \in \mathfrak T_T\atop \omega \leq \gamma}\Prob{\mu^N}{}{\left|\int_{\tau}^{\tau+\omega}g(\eta_{s}(N-1))\nabla_N^-G(\tfrac{N-1}{N})ds\right|>\delta} &= 0\label{c2.1.22}\\
\lim_{\gamma\to0}\limsup_{N\to\infty}\sup_{\tau \in \mathfrak T_T\atop \omega \leq \gamma}\Prob{\mu^N}{}{\left|\int_{\tau}^{\tau+\omega}\frac{\alpha}{N^{\theta-1}} G(\tfrac{1}{N})ds\right|>\delta} &= 0\label{c2.1.3}\\
\lim_{\gamma\to0}\limsup_{N\to\infty}\sup_{\tau \in \mathfrak T_T\atop \omega \leq \gamma}\Prob{\mu^N}{}{\left|\int_{\tau}^{\tau+\omega}\frac{g(\eta_{s}(N-1))}{N^{\theta-1}}G(\tfrac{N-1}{N})ds\right|>\delta} &= 0\label{c2.1.4}
\end{align}

Condition \eqref{c2.1.3} is immediate, since $G\in C^2([0,1])$, and then it is bounded.  

\begin{proof}[Proof of \eqref{c2.1.1}] Since $G$ is of class $C^2$ and $g$ increases at most linearly (recall hypothesis \eqref{gmostlinear}), the integral in \eqref{c2.1.1} is bounded by
	$$C(g^*, G)\int_{\tau}^{\tau+\omega}\frac{1}{N}\sum_{x=2}^{N-2}\eta_s(x)ds.$$
	By \eqref{boundnumerofparticles}, this is bounded above by
	$$C(g^*, G)\left[\frac{\omega}{N}\sum_{x=1}^{N-1}\eta_0(x)+\int_{\tau}^{\tau+\omega}\frac{1}{N}Y_s ds\right].$$
	Then, observing that $Y_s$ is non decreasing,  it is enough to show that, for any $\delta>0$
	\begin{equation}\label{c2.1.1.1}
	\lim_{\omega\to0}\limsup_{N\to\infty}\Prob{\mu^N}{}{\frac{\omega}{N}\sum_{x=1}^{N-1}\eta_0(x)> \delta}=0
	\end{equation}
	and
	\begin{equation}\label{c2.1.1.2}
	\lim_{\omega\to0}\limsup_{N\to\infty}\Prob{\mu^N}{}{\frac{\omega}{N}Y_{T+\omega}>\delta}=0.
	\end{equation}
	As in the proof of Condition 1, \eqref{c2.1.1.1} holds because $\mu^N$ is associated to an integrable profile $\rho_0$,  and \eqref{c2.1.1.2} follows from
	$$\Prob{\mu^N}{}{\frac{\omega}{N}Y_{T+\omega}>\delta}\leq\frac{\omega}{\delta N}\Exp{\mu^N}{}{Y_{T+\omega}}=\frac{\omega \alpha (T+\omega)}{\delta N^{\theta-1}}\leq\frac{\omega \alpha (T+\omega)}{\delta},$$
	which goes to zero as $\omega\to 0$.
\end{proof}

For the proof of \eqref{c2.1.2}, \eqref{c2.1.22} and \eqref{c2.1.4} we will use the following lemma.

\begin{lemma}\label{tecnico} Under the conditions \eqref{gnondecrasing} and \eqref{limitation}, for every $s\geq0$ and $x\in I_N$, it holds
	\begin{align}
	\Exp{\mu^N}{}{g(\eta_s(x))}&\leq \varphi^N(x),\label{tecnico1}\\
	\Exp{\mu^N}{}{g(\eta_s(x))^2}&\leq g^*\varphi^N(x)+(\varphi^N(x))^2.\label{tecnico2}
	\end{align}
And consequently, for $\ell=1,2$, 
\begin{equation}\label{tecnico3}
\Exp{\mu^N}{}{g(\eta_s(x))^\ell}\leq C(\alpha), 
\end{equation}
where $C(\alpha)$ is a positive constant that only depends on $\alpha$.
\end{lemma}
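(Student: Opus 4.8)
The plan is to combine the attractiveness recalled in Section~\ref{sectionattractiveness} with the explicit form of the invariant measure, thereby reducing all three estimates to moment computations under the product measure $\bar\nu^N$.

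First I would establish the decisive domination: the law of $\eta_s$ under $\bb P_{\mu^N}$ is stochastically dominated by $\bar\nu^N$ for every $s\geq 0$. Since $g$ is non decreasing, hypothesis \eqref{gnondecrasing}, the process is attractive, and the diffusive time change by the factor $N^2$ preserves both attractiveness and the invariance of $\bar\nu^N$. Writing $S_N(s)$ for the semigroup of the accelerated generator $N^2L_N$, hypothesis \eqref{limitation} gives $\mu^N\leq\bar\nu^N$, whence $\mu^N S_N(s)\leq\bar\nu^N S_N(s)=\bar\nu^N$ in the sense of \eqref{defdomination}, the last equality because $\bar\nu^N$ is invariant.

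Next, because $g$ is non decreasing and non negative, both $\eta\mapsto g(\eta(x))$ and $\eta\mapsto g(\eta(x))^2$ are monotone functions on $\Omega_N$. Evaluating the domination above on these test functions immediately yields
$$\Exp{\mu^N}{}{g(\eta_s(x))}\leq E_{\bar\nu^N}[g(\eta(x))],\qquad \Exp{\mu^N}{}{g(\eta_s(x))^2}\leq E_{\bar\nu^N}[g(\eta(x))^2],$$
so it remains only to compute the right-hand sides under the product measure \eqref{invariant}. The first moment is the identity $E_{\bar\nu^N}[g(\eta(x))]=\varphi^N(x)$ already noted after Lemma~\ref{invariantmeasurelemma}, which proves \eqref{tecnico1}. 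For the second moment I would use the zero-range combinatorics $g(k)^2/g(k)!=g(k)/g(k-1)!$ for $k\geq 1$ to reduce the series, after re-indexing, to $\varphi^N(x)\sum_{j\geq 0}g(j+1)\,\varphi^N(x)^j/g(j)!$, and then split $g(j+1)=g(j)+[g(j+1)-g(j)]$. The $g(j)$ part reproduces $(\varphi^N(x))^2\,Z(\varphi^N(x))$, while the increment part is controlled by $g(j+1)-g(j)\leq g^*$ from \eqref{gmostlinear}, contributing at most $g^*\varphi^N(x)\,Z(\varphi^N(x))$; dividing by $Z(\varphi^N(x))$ yields \eqref{tecnico2}.

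Finally, for \eqref{tecnico3} I would bound the fugacity uniformly. From \eqref{varphi} the profile is decreasing in $x$, so $\varphi^N(x)\leq\varphi^N(1)=\alpha(1+N^{1-\theta}-2N^{-\theta})$, and since $\theta\geq 1$ this is at most $2\alpha$ for every $N$ and $x\in I_N$. Inserting this into \eqref{tecnico1} and \eqref{tecnico2} gives $\Exp{\mu^N}{}{g(\eta_s(x))^\ell}\leq C(\alpha)$ with, say, $C(\alpha)=\max\{2\alpha,\,2g^*\alpha+4\alpha^2\}$, depending only on $\alpha$ (and the fixed model constant $g^*$). The only genuinely delicate point is the domination step: one must invoke that attractiveness and invariance are unaffected by the diffusive time change, so that the ordering $\mu^N\leq\bar\nu^N$ at time $0$ persists for all later times despite the lack of reversibility and of conservation of particles. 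Once this is in place, the remaining estimates are elementary series manipulations.
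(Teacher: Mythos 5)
Your proposal is correct and follows essentially the same route as the paper: attractiveness (from \eqref{gnondecrasing}) plus the initial domination \eqref{limitation} and invariance of $\bar\nu^N$ give $\mu^N S_N(s)\leq\bar\nu^N$, monotonicity of $\eta\mapsto g(\eta(x))^\ell$ reduces everything to moments under $\bar\nu^N$, and your second-moment computation (reindexing the series and splitting $g(j+1)\leq g(j)+g^*$ via \eqref{gmostlinear}) is the paper's calculation up to a change of summation index, as is the uniform bound $\varphi^N(x)\leq\varphi^N(1)\leq 2\alpha$ for $\theta\geq 1$. Your explicit remark that the diffusive time change $N^2$ preserves attractiveness and invariance is a point the paper leaves implicit, but it is the same argument.
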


\begin{remark}\label{trocaatratividadeporlimitacao}
	In the proof of Proposition \ref{proposition_tightness}, the hypotheses \eqref{gnondecrasing} and \eqref{limitation} are only used in Lemma \ref{tecnico} above. Since this result is trivial when $g$ is bounded, in this case such hypotheses are not needed to prove tightness. 
\end{remark}
\begin{proof} 
	For every $x\in I_N$, by \eqref{gnondecrasing} the function $h_x:\Omega_N\to\bb R$, given by $h_x(\eta)=[g(\eta(x))]^\ell$, is monotone. So, by attractiveness and hypothesis \eqref{limitation}, we have $$\Exp{\mu^N}{}{g(\eta_s(x))^\ell}\leq \Exp{\bar\nu^N}{}{g(\eta_s(x))^\ell}=\Exp{\bar\nu^N}{}{g(\eta_0(x))^\ell}=E_{\bar\nu^N}\left[g(\eta(x))^\ell\right].$$
	To conclude the proof of \eqref{tecnico1}, we recall that $E_{\bar\nu^N}\left[g(\eta(x))\right]=\varphi^N(x)$.

	For the proof of \eqref{tecnico2}, we write
	\begin{align*}
	E_{\bar\nu^N}\left[g(\eta(x))^2\right]  &= \frac{1}{Z(\varphi^N(x))}\sum_{k=0}^\infty g(k)^2\frac{\varphi^N(x)^k}{g(k)!}\\
	&= \frac{\varphi^N(x)}{Z(\varphi^N(x))}\sum_{k=1}^\infty g(k)\frac{\varphi^N(x)^{k-1}}{g(k-1)!}.
	\end{align*}
	
	By \eqref{gmostlinear}, we have that
	$$g(k)\leq g^* +g(k-1).$$
	Then
	\begin{align*}
	E_{\bar\nu^N}\left[g(\eta(x))^2\right]  & \leq   g^*\varphi^N(x)+ \frac{\varphi^N(x)}{Z(\varphi^N(x))}\sum_{k=1}^\infty g(k-1)\frac{\varphi^N(x)^{k-1}}{g(k-1)!}\\
	& = g^*\varphi^N(x) + \frac{\varphi^N(x)^2}{Z(\varphi^N(x))}\sum_{k=2}^\infty \frac{\varphi^N(x)^{k-2}}{g(k-2)!}\\
	& =  g^*\varphi^N(x)+ \varphi^N(x)^2.
	\end{align*}
	Since $\varphi^N$ is a linear function satisfying $\varphi^N(N-1)=\alpha$ and, for every $\theta\geq 1$, $\varphi^N(1)\leq 2\alpha$, the proof Lemma \ref{tecnico} is concluded.
\end{proof}

\begin{proof}[Proof of \eqref{c2.1.2}, \eqref{c2.1.22} and \eqref{c2.1.4}] Since $G$ is of class $C^2 $, the integrals in	\eqref{c2.1.2}, \eqref{c2.1.22} and \eqref{c2.1.4} are bounded above by
	$$C(g^*,G)\int_{\tau}^{\tau+\omega}g(\eta_s(x))ds,$$
	for $x=1$ or $x=N-1$.
	
	For all $x\in I_N$, we have
	$$\Prob{\mu^N}{}{\int_{\tau}^{\tau+\omega}g(\eta_s(x))ds>\delta}\leq \frac{1}{\delta}\Exp{\mu^N}{}{\int_{\tau}^{\tau+\omega}g(\eta_s(x))ds}.$$
	By Cauchy-Schwarz's inequality
	\begin{align*}
	\Exp{\mu^N}{}{\int_{\tau}^{\tau+\omega}g(\eta_s(x))ds} & =  \Exp{\mu^N}{}{\int_0^T \mb{1}_{[\tau,\tau+\omega]}(s)g(\eta_s(x))ds}\\
	& \leq  \sqrt{\omega} \left[\Exp{\mu^N}{}{\int_0^T g(\eta_s(x))^2ds}\right]^{1/2}\\
	& = \sqrt{\omega} \left[\int_0^T \Exp{\mu^N}{}{g(\eta_s(x))^2}ds\right]^{1/2}.
	\end{align*}
	Then, using Lemma \ref{tecnico}, we obtain 
	\begin{equation}\label{Cauchy-Schwarz's}
	\Exp{\mu^N}{}{\int_{\tau}^{\tau+\omega}g(\eta_s(x))ds}\leq (\omega TC(\alpha))^{1/2}.
	\end{equation}
	Sending $\omega\to0$, we conclude the proof.
\end{proof}

\begin{proof}[Proof of \ref{condition2.2}] Using Chebychev's inequality and the explicit formula for the quadratic variation given in \eqref{quadraticvariation}, we have
	\begin{align}
	\Prob{\mu^N}{}{\left|M_{\tau+\omega}^G-M_{\tau}^G\right|>\delta} & \leq   \frac{1}{\delta^2}\Exp{\mu^N}{}{(M_{\tau+\omega}^G-M_{\tau}^G)^2}\nonumber\\
	& =  \frac{1}{\delta^2}\Exp{\mu^N}{}{\int_\tau^{\tau+\omega}B^N(s)ds},\label{BN(s)bound}
	\end{align}
	where $B^N(s)$ was defined in \eqref{BN}. 
	
	Using that $G$ and its derivative are bounded functions, and then, using \eqref{Cauchy-Schwarz's}, we can see that  \eqref{BN(s)bound} is bounded above by $\frac{C\omega}{N}$, where $C$ is a constant that does not depends on $N$ and $\omega$. Thus the proof is concluded.
\end{proof}

\begin{remark}\label{possiblegeneralization}
	Considering a model in which a particle is removed from the system throughout site $N-1$ with rate $g(\eta(N-1))$ instead of the slow boundary assumption $\frac{g(\eta(N-1))}{N^\theta}$ made in this work, our proof can be adapted and tightness will also hold if we assume that particles are inserted into the system at site $1$ with rate $\frac{\alpha}{N^\theta}$ with $\theta>1$. In this case the fugacity profile $\varphi^N$ goes to zero uniformly as $N\to\infty$.
\end{remark}

\section{Limit points are concentrated on absolutely continuous  measures}\label{abscont}

The next step to characterize the limit points of $\{Q^N\}$ is to show that they are concentrated on trajectories of measures that are absolutely continuous with respect to the Lebesgue measure.

Next lemma states that for any sequence $\mu^N$ of probabilities on $\Omega_N$ bounded by the invariant measure $\bar\nu^N$, the corresponding sequence of empirical measures, obtained via $\pi^N:\Omega^N\to\mc M_+$, if converges, must converge to an absolutely continuous measure with respect to Lesbegue.

\begin{lemma}\label{lema_absolute_continuity} Let $\mu^N$ be a sequence of probabilities on $\Omega_N$ bounded by the invariant measure $\bar\nu^N$, i.e., $\mu^N\leq \bar\nu^N$. Let $R_{\mu^N}$ be the probability measure $\mu^N(\pi^N)^{-1}$ on $\mc M_+$, defined by
$R_{\mu^N}(\mc A)=\mu^N\{\eta: \pi^N(\eta)\in \mc A\}$
for every Borel subset $\mc A\in \mc M_+$. Then, all limit points $R^*$ of the sequence $R_{\mu^N}$ are concentrated on absolutely continuous measures with respect to the Lebesgue measure:
	$$R^*[\pi: \pi(du)=\rho(u)du]=1.$$
\end{lemma}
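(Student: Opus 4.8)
The plan is to exploit the domination hypothesis $\mu^N\leq\bar\nu^N$ together with attractiveness to reduce the whole question to the invariant measure, for which the empirical measure concentrates on the deterministic hydrostatic profile $\bar\rho$ furnished by Proposition \ref{hydrostaticlimit}. Concretely, I will show that $R^*$-almost every $\pi$ satisfies $\langle\pi,H\rangle\leq\int_0^1 H(u)\bar\rho(u)\,du$ for every nonnegative continuous $H$. Since $\bar\rho$ is bounded, any such $\pi$ is automatically absolutely continuous with respect to Lebesgue (with density bounded by $\|\bar\rho\|_\infty$), which is the assertion of the lemma.

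First I would fix a nonnegative $H\in C([0,1])$ and note that, because $H\geq0$ and $\eta(x)\leq\xi(x)$ for all $x$ whenever $\eta\leq\xi$, the functional $\eta\mapsto\mb{1}\{\langle\pi^N(\eta),H\rangle>t\}$ is monotone in the partial order of Section \ref{sectionattractiveness}. Hence the domination $\mu^N\leq\bar\nu^N$, in the sense of \eqref{defdomination}, yields the stochastic domination $\mu^N[\langle\pi^N,H\rangle>t]\leq\bar\nu^N[\langle\pi^N,H\rangle>t]$ for every $t$. By Proposition \ref{hydrostaticlimit}, $\bar\nu^N$ is associated to $\bar\rho$, so in the sense of Definition \ref{associated} the variable $\langle\pi^N,H\rangle$ converges in $\bar\nu^N$-probability to the constant $c_H:=\int_0^1 H\bar\rho\,du$; consequently the right-hand side above tends to $0$ for every $t>c_H$.

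Next I would pass to a limit point. Since $\pi\mapsto\langle\pi,H\rangle$ is continuous on $\mc M_+$, the set $\{\pi:\langle\pi,H\rangle\geq c_H+\varepsilon\}$ is closed, so the Portmanteau theorem along the subsequence defining $R^*$, combined with the stochastic domination and the concentration just established, gives $R^*[\langle\pi,H\rangle\geq c_H+\varepsilon]=0$ for every $\varepsilon>0$. Thus, for each fixed nonnegative $H$, the bound $\langle\pi,H\rangle\leq\int_0^1 H\bar\rho\,du$ holds $R^*$-a.s. Choosing a countable family $\{H_k\}$ dense among the nonnegative continuous functions, I would intersect the corresponding full-measure events and use density together with the continuity of $H\mapsto\langle\pi,H\rangle$ to obtain, $R^*$-a.s. simultaneously for all nonnegative continuous $H$, the inequality $\langle\pi,H\rangle\leq\int_0^1 H\bar\rho\,du$ (taking $H\equiv1$ also shows $\pi$ is finite a.s.). A standard approximation of indicators of open sets by increasing nonnegative continuous functions, followed by outer regularity, upgrades this to $\pi(A)\leq\int_A\bar\rho(u)\,du\leq\|\bar\rho\|_\infty\,|A|$ for every Borel set $A$, whence $\pi\ll du$ and the density is bounded by $\|\bar\rho\|_\infty$.

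The main obstacle is the passage from the pointwise-in-$H$ almost sure bound to a single almost sure statement valid for all test functions simultaneously: this requires the countable dense family and a careful matching of the closed event to the direction of the stochastic domination in the Portmanteau step, as well as the regularity argument extending the functional inequality from continuous functions to arbitrary Borel sets. By contrast, the two substantive probabilistic ingredients are essentially free: the monotonicity of the relevant functionals is immediate from attractiveness, and the concentration of $\langle\pi^N,H\rangle$ under $\bar\nu^N$ is exactly the content of Proposition \ref{hydrostaticlimit}.
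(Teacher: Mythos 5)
Your proof is correct and follows essentially the same route as the paper: the domination $\mu^N\leq\bar\nu^N$ transfers the hydrostatic concentration of Proposition \ref{hydrostaticlimit} to $\mu^N$, and the Portmanteau theorem for the closed sets $\{\pi:\langle\pi,H\rangle\geq c_H+\varepsilon\}$ passes the bound $\langle\pi,H\rangle\leq\int_0^1 H(u)\bar\rho(u)\,du$ to the limit point $R^*$, forcing absolute continuity with density bounded by $\Vert\bar\rho\Vert_\infty$. The only cosmetic differences are that you apply the definition \eqref{defdomination} directly to monotone indicator functions, whereas the paper constructs a monotone coupling via \cite[Theorem 2.2.4]{l}, and that you make explicit the countable dense family and outer-regularity step upgrading the almost-sure functional inequality to $\pi(A)\leq\int_A\bar\rho(u)\,du$ for all Borel $A$, a step the paper leaves implicit.
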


\begin{proof} Let $R^*$ be a limit point of the sequence $R_{\mu^N}$. Recall from \eqref{hydrostaticprofile} that we denoted by $\bar\rho:[0,1]\to\bb R_+$ the density profile associated to the sequence of invariant measures $\bar\nu^N$. Fix some $\varepsilon>0$, it is enough to prove that, for every non negative continuous function $G:[0,1]\to \bb R$,
	$$R^*\left[\pi:\,\langle \pi, G \rangle \leq \int_0^1 G(u)(\bar\rho(u)+\varepsilon)du \right]=1.$$
	Let $R_{\mu^{N_k}}$ be a subsequence converging to $R^*$, then
	
	\begin{align}
	R^*&\left[\pi:\,\langle \pi, G \rangle \leq \int_0^1 G(u)(\bar\rho(u)+\varepsilon)du \right]\nonumber\\
	&\geq \limsup_{k\to\infty}R_{\mu^{N_k}}\left[\pi:\,\langle \pi, G \rangle \leq \int_0^1 G(u)(\bar\rho(u)+\varepsilon)du\right] \label{abso1} \\
	&= \limsup_{k\to\infty} \mu^{N_k}\left[\eta:\langle \pi^N(\eta),G\rangle \leq \int_0^1 G(u)(\bar\rho(u)+\varepsilon)du\right].\nonumber
	\end{align}
	Since $\mu^N\leq \bar\nu^N$, by \cite[Theorem 2.2.4]{l} there exist a coupling $\bar\mu^N$, i.e, a probability measure on $\Omega_N\times \Omega_N$, with marginals $\mu^N$ and $\bar\nu^N$ respectively, such that
	$$\bar\mu^N\left[(\eta, \xi):\,\eta\leq\xi\right]=1,$$ 
	and consequently
	\begin{equation}\label{abso2}
	\bar\mu^N\left[(\eta,\xi):\,\langle \pi^N(\eta),G\rangle\leq \langle \pi^N(\xi),G\rangle\right]=1.
	\end{equation}
	By \eqref{abso1} and \eqref{abso2},
	\begin{align}
	R^*&\left[\pi:\,\langle \pi, G \rangle \leq \int_0^1 G(u)(\bar\rho(u)+\varepsilon)du\right]\nonumber\\
	&\geq \limsup_{k\to\infty}\bar \nu^{N_k}\left[\eta:\langle \pi^N(\eta),G\rangle \leq \int_0^1 G(u)(\bar\rho(u)+\varepsilon)du\right]=1,\label{abso3}
	\end{align}
by Proposition \ref{hydrostaticlimit}.
\end{proof}

Assuming that the rate function $g$ is non decreasing, by attractiveness, the semigroup $S^N(t)$ associated to the generator $N^2L_N$ preserves the partial order $\mu^N\leq\bar\nu^N$, that is, $\mu^NS^N(t)\leq \bar\nu^NS^N(t)=\bar\nu^N$ for each $0\leq t \leq T$. Therefore, Lemma \ref{lema_absolute_continuity}, when applied to the marginal at time $t$ of the measure $Q^N=\bb P_{\mu^N}(\pi^N)^{-1}$, which is $\mu^NS^N(t)$, says that for every limit point $Q^*$, and every $t\in[0,T]$,
$$Q^*\left[\pi: \pi_t(du)=\rho_t(u)du\right]=1.$$
To short notations, we write $\rho_{t}(u)$ instead of  $\rho(t,u)$.
Now consider the functional $J:\mc M^+\to \bb R_+\cup\{\infty\}$ defined by
$$J(\pi)=\begin{cases} 1, & \text{ if } \pi(du)=\rho(u)du,\\
\infty, & \text{otherwise.}
\end{cases}$$
By Fubini's lemma, 
$$E_{Q^*}\left[\int_0^TJ(\pi_t)dt\right]=\int_0^T E_{Q^*}\left[J(\pi_t)\right]dt=T.$$
In particular, changing, if necessary, $\pi_t(du)$ in a time set of measure zero, all limit points $Q^*$ are concentrated on absolutely continuous trajectories:
$$Q^*\left[\pi_{\cdot}: \; \pi_t(du)=\rho_t(u)du, ~~ 0\leq t \leq T\right]=1.$$

\section{Hydrodynamic limit}\label{sectionhydrodynamic}

In this section we will present the hydrodynamic limit that we expect in this model, together with the structure of the proof. Since some elements of the proof are not yet completed, we present it as a conjecture.

Let us recall the hypotheses assumed in Sections \ref{tight} and \ref{abscont}, that is, $\theta\geq 1$ and $g$ is a non decreasing function with bounded variation, as stated in \eqref{gmostlinear}.
 Also recall that, for $T>0$, $\bb P_{\mu^N}$  denotes the probability on the space $\mc D([0,T], \Omega_N)$ corresponding to the process $\{\eta_t:\;t\in[0,T]\}$ on $\Omega_N$ with infinitesimal generator $N^2L_N$, where $L_N$ is defined in \eqref{generator}.

\begin{conjecture}[Hydrodynamic limit]\label{maintheorem} Let $\lbrace\mu^N\rbrace_{N \in \mathbb{N}}$ be a sequence of probability measures on $\Omega_N$, bounded by the invariant measure, i.e., $\mu^N\leq\bar\nu^N$. Assume that the sequence $\lbrace\mu^N\rbrace_{N \in \mathbb{N}}$ is associated to a continuous profile\footnote{As discussed in Remark \ref{remark_perfil_inicial}. the assumption \eqref{limitation} naturally imposes the initial profile $\gamma$ to be bounded above by the profile $\bar\rho$ of the hydrostatic limit, given in \eqref{hydrostaticprofile}.} $\gamma:[0,1]\to\bb R_+$ in the sense of the Definition \ref{associated}.
Then,  for all $t \in [0,T]$, for all continuous function $G:[0,1]\to\mathbb R$ and  $\delta>0$,
\begin{equation*}
	\lim_{N \to +\infty}\mathbb{P}_{\mu^N}\left[\eta_\cdot: 
	% ou \{\eta(s)\}_{s\in[0,T]}:
	\,\Big| \tfrac{1}{N}\sum_{x\in I_N}G\left(\tfrac{x}{N}\right)\eta_t(x) - \int_{0}^1 G(u)\, \rho_t(u)\, du\  \,  \Big| > \delta \right]=0,
	\end{equation*}
	where
	\begin{itemize}
		\item for $\theta =1$,  $\rho_{t}(u)$ is a weak solution of \eqref{eq_forte} with Robin boundary condition ($\kappa=1$);
		\item for $\theta >1$,  $\rho_{t}(u)$ is a weak solution of \eqref{eq_forte} with Neumann boundary condition ($\kappa=0$).
	\end{itemize}
\end{conjecture}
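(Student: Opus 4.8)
The plan is to follow the classical entropy-free route to hydrodynamics, using the tightness from Proposition~\ref{proposition_tightness} and the absolute-continuity result of Section~\ref{abscont} as the starting point. Having established that $\{Q^N\}$ is tight and that every limit point $Q^*$ is concentrated on trajectories of absolutely continuous measures $\pi_t(du)=\rho_t(u)\,du$, it remains only to characterize these limit points by showing that $\rho_t$ is a weak solution of \eqref{eq_forte} with the appropriate boundary condition. The first step is to extract a convergent subsequence $Q^{N_k}\to Q^*$ and to identify the integral equation satisfied $Q^*$-almost surely. For this I would start from the Dynkin martingale $M_t^G$ defined in \eqref{MGt}, written out explicitly via \eqref{N2LN}. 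The martingale term $M_t^G$ vanishes in $L^2$ as $N\to\infty$, since by \eqref{quadraticvariation}--\eqref{BN} and Lemma~\ref{tecnico} its quadratic variation is $O(1/N)$; this was essentially already shown in the proof of Condition~2.2. Hence, in the limit, the integral of the generator term against $G$ must balance the difference $\langle\pi_t,G\rangle-\langle\pi_0,G\rangle$.

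The heart of the argument, carried out in Section~\ref{charac_limit_points_heu}, is the analysis of the three groups of terms in \eqref{N2LN}. The bulk term $\frac1N\sum_{x=2}^{N-2}g(\eta_s(x))\Delta_NG(\tfrac xN)$ should converge to $\int_0^1\Phi(\rho_s(u))\,\Delta G(u)\,du$ after replacing the local average of $g(\eta_s(x))$ by $\Phi(\rho_s(u))$, where $\Phi$ is the inverse of $R$ defined in \eqref{PHI}. This replacement is the standard passage from microscopic currents to the macroscopic flux and is where the function $\Phi$ enters. The boundary terms are what produce the $\theta$-dependence: the creation term $\frac{\alpha}{N^{\theta-1}}G(\tfrac1N)$ survives only when $\theta=1$, contributing $\alpha\,G(0)$, and vanishes when $\theta>1$; symmetrically the annihilation term $\frac{g(\eta_s(N-1))}{N^{\theta-1}}G(\tfrac{N-1}{N})$ contributes $\Phi(\rho_s(1))\,G(1)$ when $\theta=1$ and vanishes when $\theta>1$. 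The remaining discrete-gradient boundary terms $g(\eta_s(1))\nabla^+_NG(\tfrac1N)$ and $g(\eta_s(N-1))\nabla^-_NG(\tfrac{N-1}{N})$ converge to $\Phi(\rho_s(0))\,\partial_uG(0)$ and $\Phi(\rho_s(1))\,\partial_uG(1)$. Collecting these and integrating $\Delta G$ by parts twice against $\Phi(\rho)$, one reads off exactly the weak formulation of $\partial_t\rho=\Delta\Phi(\rho)$ with the boundary fluxes $\partial_u\Phi(\rho(t,0))=-\kappa\alpha$ and $\partial_u\Phi(\rho(t,1))=-\kappa\,\Phi(\rho(t,1))$, with $\kappa=1$ for $\theta=1$ and $\kappa=0$ for $\theta>1$.

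The main obstacle, and the reason the statement is phrased as a conjecture rather than a theorem, is precisely the replacement of $g(\eta_s(x))$ by $\Phi(\rho_s(\tfrac xN))$ at the macroscopic level. Rigorously this requires replacement lemmas: one must replace $g(\eta_s(x))$ by a function of the empirical density $g(\eta_s(x))\approx\Phi\big(\eta_s^{\varepsilon N}(x)\big)$ averaged over a microscopic box of size $\varepsilon N$, and then let $\varepsilon\to0$. In the periodic case this is handled by the one- and two-block estimates, which rest on bounds for the Dirichlet form and on the equivalence of ensembles; the difficulty here is that we lack conservation of particles and the relative-entropy control used in \cite[Chapter~5]{kl}, so these estimates cannot be imported directly. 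The strategy I would pursue is to lean on attractiveness together with the domination $\mu^N\le\bar\nu^N$ and the moment bounds of Lemma~\ref{tecnico} to obtain the necessary compactness and local-equilibrium estimates, mirroring the local-equilibrium approach of \cite{andjelvares1987}. Establishing these replacement lemmas in the boundary-driven, non-conservative setting is exactly the step deferred to future work; once they are available, the characterization of limit points and the uniqueness of weak solutions of \eqref{eq_forte} close the argument and upgrade the conjecture to a theorem.
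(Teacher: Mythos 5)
Your proposal follows essentially the same route as the paper: tightness (Proposition~\ref{proposition_tightness}) plus concentration on absolutely continuous trajectories (Section~\ref{abscont}), then the Dynkin martingale \eqref{MGt}--\eqref{N2LN} with vanishing quadratic variation, the bulk and boundary replacement of $g(\eta_s(x))$ by $\Phi$ of local empirical averages (the paper's Lemmas~\ref{RLemma_bulk} and~\ref{RLemma_boundary}, likewise deferred to future work), and uniqueness of weak solutions of \eqref{eq_forte} to close the argument. You also correctly identify both the $\theta$-dependence of the surviving boundary terms and the precise gap (replacement lemmas without conservation or entropy control, to be handled via attractiveness and the domination $\mu^N\leq\bar\nu^N$) that keeps the statement a conjecture.
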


Before introducing the hydrodynamic equation \eqref{eq_forte}, we need to define some function spaces.
 The bracket $ \langle \,\cdot\, ,\, \cdot\,\rangle$ means the inner product in $L^2[0,1]$ and $\Vert F\Vert_2^2=\langle F ,F\rangle$, for all $F\in L^2[0,1]$. 
 
 We advertise that, to short the  notation, we write $\rho_{t}(u)$ and  $G_s(u)$ instead of $\rho(t,u)$ and $G(s,u)$, respectively. 
 The reader must not misunderstand this notation with the time derivative, denoted by $\partial_s$.
\begin{definition}
	\label{Def. Sobolev space}
	Let $\mathcal{H}^{1}(0,1)$ be the set of all locally summable functions $\xi: [0,1] \to \mathbb{R}$ such that there exists a function $\partial_{u} \xi\in L^{2}[0,1]$ satisfying
	$$\langle \partial_u G, \xi \rangle =-\langle G, \partial_{u} \xi\rangle,$$
	for all $C^\infty$ function $G:(0,1) \to\mathbb R$ with compact support. 

	Let $L^2(0,T; \mathcal{H}^1(0,1))$ be the set of all measurable functions $\bar\xi: [0,T]\to L^2[0,1]$ such that $\bar\xi_t\in\mathcal{H}^1(0,1)$, for almost $t\in[0,T]$, and 
	\begin{equation}\label{sobolev norm 2}
	\|\bar \xi\|^{2}_{L^2(0,T;\mathcal{H}^1(0,1))} := \int_{0}^{T}\{\|\bar\xi_t\|^{2}_{2} + \|\partial_{u} \bar\xi_t\|^{2}_{2}\}\, dt < \infty.
	\end{equation}
\end{definition}
%\begin{remark}
%	Note that using the notation in \eqref{double bracket} we can rewrite \eqref{sobolev norm 2} as
%	\begin{equation*}
%	\| \xi \|^2_{L^{2}(0,T;\mathcal{H}^1)} = \dl \xi,\xi \dr + \dl \partial_{u} \xi, \partial_{u} \xi \dr.
%	\end{equation*}
%\end{remark}
Denote by
$C^{1,2}([0, T] \times [0,1])$ the set of real-valued functions defined on $[0,T] \times [0,1] $ that are differentiable on the first variable and twice differentiable on the second variable.

 Recall that the function $\Phi:\bb R_+ \to [0, \varphi^*)$ is the inverse function of $R$, defined in \eqref{Rphi}.

\begin{definition}[Hydrodynamic equation] 
	Let $\gamma:[0,1]\to \bb R_+$ be a continuous function. Consider the parameter $\kappa$ equal to $0$ or $1$. We say that a function $\rho:[0,T]\times[0,1]\to\bb R_+$ is a weak solution of the equation
	\begin{equation}\label{eq_forte}
	\left\{
	\begin{array}{rcll}
	\partial_t \rho_t(u) &=& \Delta \Phi (\rho_t(u)),& \text{for } u\in(0,1)\text{ and } t\in(0,T],$$\\
	\partial_u\Phi( \rho_t(0))&=&-\kappa\,\alpha, & \text{for } t\in(0,T],$$\\
	\partial_u\Phi( \rho_t(1))&=&-\kappa\, \Phi( \rho_t(1)), & \text{for } t\in(0,T],$$\\
	\rho_0(u)&=&\gamma(u), & \text{for } u\in[0,1],
	\end{array}
	\right.
	\end{equation}
	if $\Phi(\rho) \in L^2(0,T;\mathcal H^1(0,1))$  and
	\begin{align}\label{eq_int}
	\langle \rho_{t}, G_{0}\rangle-\langle \gamma , G_{0}\rangle  &-\;  \int_{0}^{t}\big\{ \langle \rho_{s} , \partial_{s}  G_{s} \rangle + \langle \Phi(\rho_s) , \Delta G_s \rangle\big\}  \,ds\nonumber\\  
	&-\int_{0}^{t}\big\{\Phi(\rho_s(0))\partial_u G_s(0)-\Phi(\rho_s(1))\partial_u G_s(1)\big\} \,ds\nonumber \\
		& -\,\kappa \int_{0}^{t}\big\{\alpha G_s(0)-\Phi(\rho_s(1))G_s(1)\big\} \,ds\,=\,0 \,,
	\end{align}
	for all $t\in[0,T]$ and $G\in C^{1,2}([0,T]\times[0,1])$. 
	\begin{itemize}
		\item When $\kappa=0$, we say that the PDE \eqref{eq_forte} has Neumann boundary condition;
		\item When $\kappa=1$, we say that the PDE \eqref{eq_forte} has Robin boundary condition.
	\end{itemize}
\end{definition}
We consider the PDE \eqref{eq_forte} with
more general boundary conditions in the Section \ref{B}, see equation \eqref{general_hid_eq}.

Let us recall, from the beginning of Section \ref{tight}, that $Q^N$ denotes the probability on $\mc D([0,T],\mc M_+)$, corresponding to the empirical process $\{\pi_t^N:\;t\in[0,T]\}$.  

\begin{conjecture}\label{pro:conv}
	As $N\to\infty$, the sequence of probabilities $ \{ Q^N\}_{N \in \mathbb N} $ converges weakly to $Q$, the probability measure on $\mathcal D([0,T], \mc M_+)$ that gives mass one to the trajectory $ \pi_t(du) = \rho_t(u)du $, where $ \rho:[0,T]\times[0,1]\to\bb R$ is the weak solution of the hydrodynamic equation \eqref{eq_forte}, with $\kappa=1$  if $\theta=1$, and $\kappa=0$ if $\theta>1$. We call $\rho_t(u)$ the hydrodynamic profile.
\end{conjecture}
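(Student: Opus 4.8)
The plan is to follow the three-step scheme for hydrodynamic limits of \cite[Chapter 4]{kl}: tightness, characterization of the limit points, and uniqueness of the limiting PDE. Tightness of $\{Q^N\}$ is already provided by Proposition \ref{proposition_tightness}, so the sequence is relatively compact and every subsequence has a convergent sub-subsequence; let $Q^*$ be any such limit point. By the argument of Section \ref{abscont}, $Q^*$ is concentrated on trajectories of absolutely continuous measures $\pi_t(du)=\rho_t(u)\,du$, and the association of $\mu^N$ to $\gamma$ forces $\rho_0=\gamma$. Once we show that $Q^*$ is supported on weak solutions of \eqref{eq_forte} and that such solutions are unique, all limit points coincide with the law $Q$, and the full sequence $\{Q^N\}$ converges, which is the assertion.

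The core is to prove that \eqref{eq_int} holds $Q^*$-almost surely. First I would rewrite the martingale \eqref{MGt} for time-dependent test functions $G\in C^{1,2}([0,T]\times[0,1])$, which introduces the extra term $\int_0^t\langle \pi_s^N,\partial_s G_s\rangle\,ds$ matching the $\langle\rho_s,\partial_sG_s\rangle$ contribution of \eqref{eq_int}. From the explicit quadratic variation \eqref{quadraticvariation}--\eqref{BN} and the moment bounds of Lemma \ref{tecnico}, the martingale is of order $N^{-1}$ in $L^2(\bb P_{\mu^N})$ and thus disappears in the limit. It then remains to pass to the limit in $\int_0^t N^2L_N\langle\pi_s^N,G_s\rangle\,ds$, using the decomposition \eqref{N2LN}. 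The difficulty is that the bulk term $\tfrac{1}{N}\sum_x g(\eta_s(x))\Delta_NG(\tfrac{x}{N})$ and the boundary terms carrying $g(\eta_s(1))$ and $g(\eta_s(N-1))$ involve the microscopic rate $g(\eta_s(x))$, which has to be replaced by the macroscopic $\Phi(\rho_s(u))$; this is the content of the replacement lemmas. Granting them, the bulk term converges to $\langle\Phi(\rho_s),\Delta G_s\rangle$, the first-order boundary terms to $\Phi(\rho_s(0))\partial_uG_s(0)-\Phi(\rho_s(1))\partial_uG_s(1)$, and, when $\theta=1$, the terms $\tfrac{\alpha}{N^{\theta-1}}G(\tfrac{1}{N})-\tfrac{g(\eta_s(N-1))}{N^{\theta-1}}G(\tfrac{N-1}{N})$ converge to $\alpha G_s(0)-\Phi(\rho_s(1))G_s(1)$, whereas for $\theta>1$ they are $O(N^{1-\theta})$ and vanish. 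This dichotomy is exactly the source of the parameter $\kappa$, and collecting all contributions reproduces \eqref{eq_int}.

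Two complementary ingredients complete the characterization and uniqueness. To meet the regularity requirement $\Phi(\rho)\in L^2(0,T;\mathcal H^1(0,1))$ in the definition of weak solution, I would establish an energy estimate bounding $\int_0^T\|\partial_u\Phi(\rho_s)\|_2^2\,ds$, again via a Riemann-sum approximation controlled by the martingale decomposition and Lemma \ref{tecnico}. For uniqueness of the nonlinear diffusion \eqref{eq_forte}, I would use that $\Phi=R^{-1}$ is strictly increasing: testing the equation satisfied by the difference of two weak solutions against $\Phi(\rho^{(1)})-\Phi(\rho^{(2)})$ and exploiting the Robin or Neumann boundary data to absorb the boundary terms gives, by the standard monotonicity argument for porous-medium-type equations, that the two solutions agree.

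The main obstacle is the replacement step. In contrast with the periodic case, treated via the entropy method in \cite[Chapter 5]{kl}, here there is no conservation of particles and no \emph{a priori} entropy bound, and the occupation variables are unbounded, so the usual one-block and two-block estimates cannot be imported directly. The route I would take is to establish local equilibrium through attractiveness and the domination $\mu^N\le\bar\nu^N$, in the spirit of \cite{andjelvares1987}: coupling with the stationary measure $\bar\nu^N$ and using the monotonicity of $g$ to control spatial averages of $g(\eta_s(x))$ and show they concentrate on $\Phi$ of the empirical density. Carrying this out rigorously, in particular the two-block estimate and the analysis near the slow boundary where creation and annihilation couple to the bulk, is the delicate point that the present paper postpones to future work.
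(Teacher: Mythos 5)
Your outline coincides with the paper's own plan: tightness via Proposition \ref{proposition_tightness}, concentration on absolutely continuous trajectories as in Section \ref{abscont}, the Dynkin-martingale decomposition \eqref{MGt}--\eqref{N2LN} with the replacement Lemmas \ref{RLemma_bulk} and \ref{RLemma_boundary} producing the $\kappa$-dichotomy between $\theta=1$ and $\theta>1$, the energy estimate guaranteeing $\Phi(\rho)\in L^2(0,T;\mathcal H^1(0,1))$, and uniqueness of weak solutions to upgrade subsequential limits to convergence of the full sequence --- exactly the strategy of Sections \ref{tight}--\ref{charac_limit_points_heu}. Note that the statement is a conjecture in the paper precisely because the two replacement lemmas, the energy estimate and the uniqueness argument are postponed to future work, and your proposal likewise identifies but does not close these gaps (your suggested route via attractiveness, the coupling $\mu^N\leq\bar\nu^N$ and local equilibrium in the spirit of \cite{andjelvares1987} is consistent with what the paper indicates, but it remains the open part).
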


Conjecture \ref{maintheorem} is a consequence of the Conjecture 2.
		Since the hydrodynamic behaviour is described by Conjecture \ref{maintheorem}, it is not mandatory to get Conjecture 2 to understand the hydrodynamics.  But the second conjecture gives more information about the asymptotic behaviour of the system, because it handles with the whole time-trajectory of the density of particles. Moreover, using   the result stated in Conjecture 2 is possible to study the fluctuations  and the large deviations of this convergence, completing the asymptotic characterization of  the model.

The proof of Conjecture \ref{pro:conv} may be divided into three steps. 

The first step is to show tightness, which is done in Section \ref{tight}. This implies that the sequence  $ \{ Q^N\}_{N \in \mathbb N} $ has limit points. 

The second step is the characterization of these limit points, which we split in two parts: The first part is the subject of Section \ref{abscont}, where we proved that the limit points of the sequence $\{Q^N\}$ are  concentrated on trajectories of measures that are absolutely continuous with respect to the Lebesgue measure, so that for each $t$, $\pi_t(du)=\rho_t(u)du$ for some function $\rho:[0,T]\times[0,1]\to\bb R_+$.  The second part is to show that $\rho$ is a solution of the corresponding hydrodynamic equation. This part we postpone to a future work, however, in the Section  \ref{charac_limit_points_heu}, we present some heuristics of this proof. 

The third step, which will be also postponed to a future work, is to show the uniqueness of solution for the hydrodynamic equations. This uniqueness would guarantee that the sequence $\{Q^N\}$ has a unique limit point, and thus the proof of Conjecture  \ref{pro:conv} would be concluded.

%
%\section*{Appendix}
%\addcontentsline{toc}{section}{Appendix}

\section{Heuristics of the hydrodynamic equation}\label{charac_limit_points_heu}

Let $\lbrace\mu^N\rbrace_{N \in \mathbb{N}}$ be a sequence of probability measures on $\Omega_N$, bounded by the invariant measure, as stated in \eqref{limitation}, and associated with to a continuous profile $\gamma:[0,1]\to\bb R_+$ in the sense of Definition \ref{associated}.
Recall that $\{Q^N\}$ is a sequence of probabilities on $\mathcal D([0,T], \mc M_+)$ defined by $Q^N=\bb P_{\mu^N}(\pi^N)^{-1}$.

Let $ Q^* $ be a limit point of $\{Q^N\}$. In Section \ref{abscont}, we proved that  $Q^*$ 
is a probability measure on $\mathcal D([0,T], \mc M_+)$  which gives mass one to paths of absolutely continuous measures: $ \pi_t(du) = \rho_t(u)du $. In this section we present some heuristics to obtain that  $\rho_{t}(u)$ is a weak solution of the corresponding hydrodynamic equation. For this purpose, we will assume, \emph{heuristically}, that 
\begin{equation}\label{assume}
\left\langle \pi^N_s,H \right\rangle\to \int_0^1H(u)\,\rho_s(u)\,du\,,
\end{equation}
when $N\to \infty$, for all $s\in [0,T]$ and $H\in C[0,1]$. 

In order to prove that $\rho_{t}(u)$ satisfies the hydrodynamic equation, we evoke the Dynkin martingale, introduced in Subsection \ref{martingale_section}, $M_t^G$ for  $G\in C^2([0,1])$. Using  \eqref{MGt} and \eqref{N2LN},   we rewrite this martingale as 
\begin{align}\label{int_0}
M_t^G= & \left\langle \pi^N_t,G \right\rangle-\left\langle \pi^N_0,G \right\rangle-\int_0^t \frac{1}{N}\sum_{x=2}^{N-2}g(\eta_s(x))\Delta_N G\left(\tfrac{x}{N}\right)\,ds\nonumber \\
&- \int_0^t \Big(g(\eta_s(1))\nabla^+_N G\left(\tfrac{1}{N}\right)-  g(\eta_{s}(N-1))\nabla^-_NG\left(\tfrac{N-1}{N}\right)\Big)\,ds\\
&- \int_0^t\Big(\frac{\alpha}{N^{\theta-1}}\, G\left(\tfrac{1}{N}\right)-\frac{g(\eta_{s}(N-1))}{N^{\theta-1}}\,G\left(\tfrac{N-1}{N}\right)\Big)\,ds.\nonumber
\end{align}

By the definition of $\Delta_NG$, $\nabla^+_NG$ and $\nabla^-_NG$ in \eqref{discreto}
and the fact that $G\in C^2([0,1])$, we can rewrite  $M_t^G$ as 
\begin{align}\label{mart_0}
&\left\langle \pi^N_t,G \right\rangle-\left\langle \pi^N_0,G \right\rangle-\int_0^t \frac{1}{N}\sum_{x=2}^{N-2}g(\eta_s(x))\Delta G\left(\tfrac{x}{N}\right)\,ds\nonumber \\
&- \int_0^t \Big(g(\eta_s(1))\partial_u G\left(0\right)-  g(\eta_{s}(N-1))\partial_u G\left(1\right)\Big)\,ds\\
&- \int_0^t\Big(\frac{\alpha}{N^{\theta-1}}\, G\left(0\right)-\frac{g(\eta_{s}(N-1))}{N^{\theta-1}}\,G\left(1\right)\Big)\,ds\,+\, \mathcal R^{1,\theta}_N(G,t)\,,\nonumber
\end{align}
where $\mathbb E_{\mu^N}[\mathcal R^{1,\theta}_N(G,t)]$ goes to zero, as $N\to \infty$, for all $\theta\geq 1$, and uniformly on $t\in[0,T]$,  because of Lemma \ref{tecnico} and Taylor's expansion. By \eqref{assume}, as $N\to\infty$,
$$\left\langle \pi^N_t,G \right\rangle-\left\langle \pi^N_0,G \right\rangle\to\left\langle \rho_t,G \right\rangle-\left\langle \rho_0,G \right\rangle.$$

Then we need to study the bulk and boundary terms of the expression \ref{mart_0}.
We start by the bulk term: $\int_0^t \frac{1}{N}\sum_{x=2}^{N-2}g(\eta_s(x))\Delta G\left(\tfrac{x}{N}\right)\,ds$. In order to do this, we will introduce some notation.

For $\varepsilon>0$, consider the set
\begin{equation}\label{available replac}
I^{\varepsilon }_{N}:=\{1+\varepsilon N, \ldots, N-1-\varepsilon N\}\,.
\end{equation} 
 Above and in all text $\varepsilon N$ must be understood as $\lfloor\varepsilon N\rfloor$.

Then the bulk term becomes 
\begin{equation}\label{int_1}\begin{split}
&\int_0^t \frac{1}{N}\sum_{x\in I^{\varepsilon }_{N}}g(\eta_s(x))\Delta G\left(\tfrac{x}{N}\right)\,ds +\, \mathcal R^{2}_{N,\varepsilon}(G,t)\,,
\end{split}
\end{equation}
where 
$$\mathcal R^{2}_{N,\varepsilon}(G,t)=\int_0^t \frac{1}{N}\left(\sum_{x=2}^{\varepsilon N}g(\eta_s(x))\Delta G\left(\tfrac{x}{N}\right)+\sum_{x=N-\varepsilon N}^{N-2}g(\eta_s(x))\Delta G\left(\tfrac{x}{N}\right)\right)ds \,.$$
Using \eqref{tecnico3} from Lemma \ref{tecnico}, we have $\mathbb E_{\mu^N}[\mathcal R^{2}_{N,\varepsilon}(G,t)]\leq 2\varepsilon CT\Vert\Delta G\Vert_\infty$.
Thus, $\mathbb E_{\mu^N}[\mathcal R^{2}_{N,\varepsilon}(G,t)]$ goes to zero, when $N\to\infty$ and  $\varepsilon\to 0$.
Adding and subtracting suitable terms, we can see that the integral in \eqref{int_1} is equal to 
\begin{equation}\label{int_2}\begin{split}
&\int_0^t \frac{1}{N}\sum_{x\in I^{\varepsilon }_{N}}\Delta G\left(\tfrac{x}{N}\right)\frac{1}{\varepsilon N}\sum_{y=x+1}^{x+\varepsilon N}g(\eta_s(y))\,ds +\, \mathcal R^{3}_{N,\varepsilon}(G,t)\,,
\end{split}
\end{equation}
where
\begin{equation*}%\label{int_2}
\begin{split}
\mathcal R^{3}_{N,\varepsilon}(G,t)=&\int_0^t \frac{1}{N}\sum_{x\in I^{\varepsilon }_{N}}\Big\{g(\eta_s(x))-\frac{1}{\varepsilon N}\sum_{y=x+1}^{x+\varepsilon N}g(\eta_s(y))\Big\}\Delta G\left(\tfrac{x}{N}\right)\,ds\,.
\end{split}
\end{equation*}
 
Changing variables $\mathcal R^{3}_{N,\varepsilon}(G,t)$ can be rewritten as 
\begin{equation*}%\label{int_2}
\begin{split}
&\int_0^t \frac{1}{N}\Bigg\{\sum_{x\in I^{\varepsilon }_{N}}g(\eta_s(x))\Delta G\left(\tfrac{x}{N}\right)-\sum_{y\in I_{N}}g(\eta_s(y))\frac{1}{\varepsilon N}\sum_{z=1}^{\varepsilon N}\Delta G\left(\tfrac{y-z}{N}\right)\Bigg\}\,ds\\
=&\int_0^t \frac{1}{N}\sum_{x\in I^{\varepsilon }_{N}}g(\eta_s(x))\Big\{\Delta G\left(\tfrac{x}{N}\right)-\frac{1}{\varepsilon N}\sum_{z=1}^{\varepsilon N}\Delta G\left(\tfrac{x-z}{N}\right)\Big\}\,ds\\
&+\int_0^t \frac{1}{N}\sum_{x\in I_N\backslash I_N^\varepsilon}g(\eta_s(x))\,\frac{1}{\varepsilon N}\sum_{z=1}^{\varepsilon N}\Delta G\left(\tfrac{x-z}{N}\right)\,ds\,.
\end{split}
\end{equation*}
Then
$\mathbb E_{\mu^N}[\mathcal R^{3}_{N,\varepsilon}(G,t)]\to 0$, as $N\to \infty$ and $\varepsilon\to 0$.
To handle the integral term in \eqref{int_2}, we use the function $\Phi:\bb R_+ \to [0, \varphi^*)$, which is the inverse function of $R$, defined in \eqref{Rphi}. We will need to introduce some more notation.
Let $\overrightarrow{\eta}^{\varepsilon N}_{s}(x)$ be the empirical density in the box of size $\varepsilon N$, which is given on, $x\in I_N^\varepsilon$, by
\begin{equation}\label{densidade_empirica}
\overrightarrow{\eta}^{\varepsilon N}_{s}(x)=\frac{1}{\varepsilon N}\sum_{y=x+1}^{x+\varepsilon N}\eta_s(y)\,.
\end{equation}
Then the  integral term in \eqref{int_2} is equal to
\begin{equation}\label{int_3}\begin{split}
&\int_0^t \frac{1}{N}\sum_{x\in I^{\varepsilon }_{N}}\Delta G\left(\tfrac{x}{N}\right)\Phi(\overrightarrow{\eta}^{\varepsilon N}_{s}(x))\,ds +\, \mathcal R^{4}_{N,\varepsilon}(G,t)\,.
\end{split}
\end{equation}
The last term above is the important expression:
\begin{equation}\label{R4}\begin{split}
& \mathcal R^{4}_{N,\varepsilon}(G,t)=\int_0^t \frac{1}{N}\sum_{x\in I^{\varepsilon }_{N}}\Delta G\left(\tfrac{x}{N}\right)\Big\{\frac{1}{\varepsilon N}\sum_{y=x+1}^{x+\varepsilon N}g(\eta_s(y))-\Phi(\overrightarrow{\eta}^{\varepsilon N}_{s}(x))\Big\}\,ds,
\end{split}
\end{equation}
which to prove that it is negligible we need to evoke the 
following very important result:
\begin{lemma}[Replacement lemma for the bulk]\label{RLemma_bulk}\footnote{The proof of this lemma is a future work.}
	For every $\delta>0$,
	\begin{equation*}
	\varlimsup_{\varepsilon\to 0}\varlimsup_{N\to\infty}\mathbb{P}_{\mu^N}\Big[\eta_\cdot\,:\,\Big|\mathcal R^{4}_{N,\varepsilon}(G,T)\Big|>\delta \,\Big]=0,
	\end{equation*}
	where $\mathcal R^{4}_{N,\varepsilon}(G,t)$ was defined in \eqref{R4}.
\end{lemma}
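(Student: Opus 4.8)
The plan is to read $\mathcal R^{4}_{N,\varepsilon}$ as a local-equilibrium (one-block) statement: over a mesoscopic box of length $\varepsilon N$, the spatial average of the jump rate $g$ should coincide with $\Phi$ evaluated at the spatial average of the density. I would prove it by the entropy method of Guo, Papanicolaou and Varadhan, adapted to the open, non-conservative dynamics. By Markov's inequality it suffices to show that $\mathbb E_{\mu^N}\big[\,|\mathcal R^{4}_{N,\varepsilon}(G,T)|\,\big]\to 0$, first as $N\to\infty$ and then as $\varepsilon\to 0$. Since the integrand of $\mathcal R^{4}_{N,\varepsilon}$ is supported on $I^{\varepsilon}_{N}$ (see \eqref{available replac}), the boundary sites $1$ and $N-1$ never enter, so the slow boundary produces only lower-order corrections in the estimates below and the fugacity $\varphi^N$ may be treated as locally constant.

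First I would fix a homogeneous reference product measure $\nu_*$ of the form \eqref{invariant} with constant fugacity $\varphi_*\in(\sup_{x\in I_N}\varphi^N(x),\varphi^*)$, so that $\bar\nu^N\leq\nu_*$, hence $\mu^N\leq\nu_*$. With a space-independent fugacity, $\nu_*$ is reversible for the bulk generator $L_{N,0}$, which yields the Dirichlet form $D_N(\sqrt{f})=\sum_{x=1}^{N-2}\int g(\eta(x))\big(\sqrt{f(\eta^{x,x+1})}-\sqrt{f(\eta)}\big)^2\,d\nu_*$. Writing $f_s$ for the density of $\mu^N S^N(s)$ with respect to $\nu_*$ and differentiating the relative entropy $H(f_s):=H(\mu^N S^N(s)\,|\,\nu_*)$, the reversibility of $L_{N,0}$ produces a negative contribution $-c\,N^2 D_N(\sqrt{f_s})$, while the single-site boundary generator $N^2L_{N,b}$ contributes at most $O(N^{2-\theta})$ (the unbounded annihilation rate $g(\eta(N-1))$ being controlled by the moment bounds of Lemma \ref{tecnico} and Lemma \ref{momentoslimitados}). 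Integrating in time gives $\int_0^T N^2 D_N(\sqrt{f_s})\,ds\leq C\big(H(\mu^N\,|\,\nu_*)+N^{2-\theta}T\big)$. Since $\theta\geq 1$ and, in the natural examples of Remark \ref{remark_perfil_inicial}, $H(\mu^N\,|\,\nu_*)=O(N)$, the time-integrated Dirichlet form is $o(1)$, which is precisely the input required by the one-block estimate.

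The remaining static estimate I would resolve through the classical one-block and two-block estimates. The one-block estimate replaces the fine-scale average of $g$ by $\Phi$ of the fine-scale density, at the cost of a Dirichlet-form term absorbed by $D_N$; here the zero-range structure is especially convenient, since the canonical measures (the product measure conditioned on the number of particles in a box) do not depend on the fugacity, so the argument is insensitive to the choice of reference $\nu_*$. At this point I would invoke the equivalence of ensembles for the zero-range process to identify the relevant canonical expectation of $g(\eta(0))$ with $\Phi$ of the density, using the identity $E_{\nu_*}[g(\eta(0))]=\varphi_*=\Phi(R(\varphi_*))$ underlying \eqref{Rphi} and \eqref{invariant}. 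The two-block estimate, together with the continuity of $\Phi$, then bridges the microscopic box and the mesoscopic scale $\varepsilon N$ entering $\overrightarrow{\eta}^{\varepsilon N}_{s}$ in \eqref{densidade_empirica}, and the uniform moment bounds of Lemma \ref{tecnico} and Lemma \ref{momentoslimitados} supply the uniform integrability needed to control the error terms and to pass to the limit $\varepsilon\to 0$.

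The main obstacle is the a priori input that, in the periodic conservative case, is supplied for free by the relative-entropy hypothesis of \cite[Chapter 5]{kl}: I still need $H(\mu^N\,|\,\nu_*)=o(N^2)$, and this does not follow from the standing hypotheses $\mu^N\leq\bar\nu^N$ and association to a profile alone. It does hold for the slowly varying product measures $\nu^N_{\gamma(\cdot)}$ of Remark \ref{remark_perfil_inicial}, whose per-site entropy is $O(1)$, and this is the case I would treat first. Removing this restriction — running the argument under domination alone — is the genuinely hard point and, I believe, the reason the authors postpone the proof. The idea consistent with the paper's philosophy is to replace the entropy input by attractiveness: the inequality $\mu^N S^N(t)\leq\bar\nu^N$ together with attractiveness controls expectations of \emph{monotone} functions and should transfer the concentration of the box averages, valid under the product measure $\bar\nu^N$, to the time-$t$ law. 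The difficulty is that the quantity to be estimated, $\tfrac{1}{\varepsilon N}\sum_{y=x+1}^{x+\varepsilon N}g(\eta_s(y))-\Phi(\overrightarrow{\eta}^{\varepsilon N}_{s}(x))$, is a difference of two increasing functions and hence not itself monotone, so domination by itself does not close the estimate; finding a coupling-based substitute for the one-block estimate is the crux that remains.
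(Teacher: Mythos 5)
There is no paper proof to compare against here: the footnote attached to Lemma \ref{RLemma_bulk} explicitly defers its proof to future work, so your proposal can only be judged on its own terms. On those terms, the roadmap is the natural one and most of its ingredients are correctly adapted to this model: a homogeneous reference measure $\nu_*$ with fugacity $\varphi_*$ strictly between $\sup_N\sup_{x\in I_N}\varphi^N(x)$ and $\varphi^*$ does dominate $\bar\nu^N$ (the single-site marginals have monotone likelihood ratios in the fugacity) and is reversible for $L_{N,0}$; the boundary generator is the only non-reversible piece and its entropy production is of order $N^{2-\theta}$, with the unbounded annihilation rate tamed by \eqref{gmostlinear} and the moment bounds of Lemma \ref{tecnico}; and the fugacity-independence of the canonical measures makes the one-block/two-block scheme and the equivalence of ensembles insensitive to the choice of $\nu_*$, while attractiveness and domination can replace conservation in the large-density cutoffs. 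For initial data as in Remark \ref{remark_perfil_inicial}, whose relative entropy with respect to $\nu_*$ is $O(N)$, this plan should go through and would yield the lemma in that special case.

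The genuine gap is the one you name yourself, and it is decisive for the lemma as actually stated: its hypotheses are only \eqref{limitation} and association to a profile, and stochastic domination by $\bar\nu^N$ does not imply any relative-entropy bound, so the Guo--Papanicolaou--Varadhan input $\int_0^T N^2 D_N(\sqrt{f_s})\,ds\leq C\big(H(\mu^N\,|\,\nu_*)+N^{2-\theta}T\big)$ has no a priori right-hand side under the standing assumptions. Reintroducing an entropy hypothesis proves a weaker statement than the lemma, and it runs against the paper's declared strategy, which is precisely to dispense with the entropy hypothesis of the periodic theory in favour of attractiveness. Your proposed repair --- transferring concentration of box averages from $\bar\nu^N$ to $\mu^N S^N(t)$ by coupling --- founders exactly where you say it does: the quantity $\frac{1}{\varepsilon N}\sum_{y=x+1}^{x+\varepsilon N}g(\eta_s(y))-\Phi\big(\overrightarrow{\eta}^{\varepsilon N}_{s}(x)\big)$ in $\mathcal R^{4}_{N,\varepsilon}$ is a difference of increasing functions, hence not monotone, so domination alone controls neither its sign nor its absolute value, and no amount of Markov's inequality converts the moment bounds of Lemma \ref{tecnico} into the required local-equilibrium identification. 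In short: your write-up is a credible plan for the entropy-bounded case plus an honest identification of the open point, but it is not a proof of Lemma \ref{RLemma_bulk}, and the missing step coincides with the reason the authors postpone the proof.
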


Note that,  $\overrightarrow{\eta}^{\varepsilon N}_{s}(x )=\langle\pi^N_{s},\iota_\varepsilon^{x/N}\rangle$, where 
$$\iota_\varepsilon^{u}(v)=\tfrac{1}{\varepsilon}{\bf{1}}_{(u,	u+\varepsilon)}(v),$$ for $u,v\in[0,1]$.
Then the integral in \eqref{int_3} can be rewritten as
\begin{equation*}\begin{split}
&\int_0^t \frac{1}{N}\sum_{x\in I^{\varepsilon }_{N}}\Delta G\left(\tfrac{x}{N}\right)\Phi\big(\langle\pi^N_{s},\iota_\varepsilon^{x/N}\rangle\big)\,ds \,.
\end{split}
\end{equation*}
Since the function inside the summation above is integrable, it is possible to prove that the last integral  is asymptotically (when $N\to\infty$ and $\varepsilon \to 0$) equal to 
\begin{equation*}\begin{split}
&\int_0^t \int_0^1\Delta G\left(u\right)\Phi\big(\langle\pi^N_{s},\iota_\varepsilon^{u}\rangle\big)\,du\,ds \,.
\end{split}
\end{equation*}

 By \eqref{assume},
we have  that, for $u\in[0,1]$  $$\langle\pi^N_{s},\iota_\varepsilon^{u}\rangle\to \int_0^1 \rho_s(v)\iota_\varepsilon^{u}(v)\,dv,$$  as $N\to\infty$.
Finally, taking $\varepsilon\to 0$, the last integral converges to $\rho_s(u)$. 
Then
the bulk term of the expression \eqref{mart_0} converges to
\begin{equation*}%\label{int_4}
\int_0^t \int_0^1\Delta G(u)\Phi(\rho_s(u))\,du\,ds = \int_0^t \langle \Phi(\rho_s) , \Delta G \rangle \,ds, 
\end{equation*}
when $N\to\infty$ and $\varepsilon \to 0$.

\bigskip

In order to analyze the boundary terms of \eqref{mart_0}, we start by observing that the expectation with respect to the probability $\mathbb P_{\mu^N}$ of
$$ \int_0^t\Big(\frac{\alpha}{N^{\theta-1}}\, G\left(0\right)-\frac{g(\eta_s(N-1))}{N^{\theta-1}}\,G\left(1\right)\Big)\,ds,$$ 
goes to zero, as  $N\to \infty$, in the case $\theta >1$, because of Lemma \ref{tecnico}.
Then, when $\theta >1$, we only need to analyze the term
\begin{align}\label{b1}
&- \int_0^t \Big(g(\eta_s(1))\partial_u G\left(0\right)-  g(\eta_s(N-1))\partial_u G\left(1\right)\Big)\,ds.
\end{align}
 In the case $\theta=1$, rewriting the boundary terms of \eqref{mart_0}, we have
\begin{align}\label{b2}
&- \int_0^t \Big(g(\eta_s(1))\partial_u G\left(0\right)-\alpha G\left(0\right)-  g(\eta_s(N-1))(\partial_u G\left(1\right)+G\left(1\right))\Big)\,ds.
\end{align}
In both cases we need to replace $g(\eta_s(1))$ and $g(\eta_s(N-1))$  by the average of $g$ in a box of size $\varepsilon N$ in a neighborhood of  $x=1$ or $x=N-1$ inside $I_N$, that is $\frac{1}{\varepsilon N}\sum_{y=2}^{1+\varepsilon N}g(\eta_s(y))$ and $\frac{1}{\varepsilon N}\sum_{y={N-1}-\varepsilon N}^{{N-2}}g(\eta_s(y))$, respectively. Note that this is similar to what we did above in \eqref{int_2}. The next step is to use the following replacement lemma with a suitable choice of $f_1$ and $f_2$.

\begin{lemma}[Replacement lemma for the boundary]\label{RLemma_boundary}\footnote{The proof of this lemma is a future work.} For  $\theta\geq 1$ and 
	for all continuous functions $f_i:[0,T]\to\mathbb R$, with $i=1,2$, and  every $\delta>0$, we have
	\begin{equation*}
	\varlimsup_{\varepsilon\to 0}\varlimsup_{N\to\infty}\mathbb{P}_{\mu^N}\Big[\eta_\cdot:\,\,\Big|\mathcal R^{b}_{N,\varepsilon}(f_1,f_2,T)\Big|>\delta \,\Big]=0,
	\end{equation*}
	where 
	\begin{equation*}%\label{Rb}
	\begin{split}
	\mathcal R^{b}_{N,\varepsilon}(f_1,f_2,T)=&\int_0^T f_1(s)\,\Big\{\frac{1}{\varepsilon N}\sum_{y=2}^{1+\varepsilon N}g(\eta_s(y))-\Phi(\overrightarrow{\eta}^{\varepsilon N}_{s}(1))\Big\}\,ds\\+&\int_0^T f_2(s)\,\Big\{	\frac{1}{\varepsilon N}\sum_{y={N-1}-\varepsilon N}^{{N-2}}g(\eta_s(y))-\Phi(\overleftarrow{\eta}^{\varepsilon N}_{s}(N\!-\!1))\Big\}\,ds\\
	\end{split}
	\end{equation*}
	where $\overrightarrow{\eta}^{\varepsilon N}_{s}(1)$ was defined in  \eqref{densidade_empirica}
	and $$\overleftarrow{\eta}^{\varepsilon N}_{s}(N-1)=\frac{1}{\varepsilon N}\sum_{y={N-1}-\varepsilon N}^{{N-2}}\eta_s(y)\,.$$
\end{lemma}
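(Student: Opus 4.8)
The plan is to prove Lemma \ref{RLemma_boundary} by the classical one-block/two-block scheme of Guo--Papanicolaou--Varadhan, adapted here to the unbounded occupation variables and to the slow, non-reversible boundary. By Markov's inequality it suffices to show that $\mathbb{E}_{\mu^N}[|\mathcal R^{b}_{N,\varepsilon}(f_1,f_2,T)|]$ vanishes in the iterated limit where $N\to\infty$ and then $\varepsilon\to0$, and since $f_1,f_2$ are bounded and the two contributions are symmetric, I would treat only the left-boundary term built from
$$V^{\varepsilon}_N(\eta_s)\;:=\;\frac{1}{\varepsilon N}\sum_{y=2}^{1+\varepsilon N}g(\eta_s(y))\;-\;\Phi\big(\overrightarrow{\eta}^{\varepsilon N}_{s}(1)\big),$$
the right one being handled identically with $\overleftarrow{\eta}$ in place of $\overrightarrow{\eta}$. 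The guiding static fact is the following: the fugacity profile \eqref{varphi} is linear with slope $-\alpha/N^{\theta}$, hence essentially constant over the box $\{2,\dots,1+\varepsilon N\}$, equal to $\varphi^N(1)$ up to an error $O(\varepsilon\alpha/N^{\theta-1})$. Thus, under the invariant measure $\bar\nu^N$, the restriction to this box is, up to negligible error, a homogeneous product measure with fugacity $\varphi^N(1)$, for which $\mathbb{E}[g(\eta(y))]=\varphi^N(1)$ and $\mathbb{E}[\eta(y)]=R(\varphi^N(1))$; since $\Phi=R^{-1}$, the law of large numbers in the box gives $V^{\varepsilon}_N\to 0$ as $\varepsilon N\to\infty$. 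The lemma amounts to upgrading this equilibrium fact to the \emph{dynamic} law $\mu^N S^N(t)$.

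For the transfer I would use the entropy inequality relative to a homogeneous reference product measure $\nu_*$ of fugacity $\varphi^N(1)$: for every $B>0$,
$$\mathbb{E}_{\mu^N}\Big[\Big|\int_0^T f_1(s)\,V^{\varepsilon}_N(\eta_s)\,ds\Big|\Big]\;\le\;\frac{H(\mu^N\mid\nu_*)}{BN}\;+\;\frac{1}{BN}\log\mathbb{E}_{\nu_*}\Big[\exp\Big(BN\Big|\int_0^T f_1(s)\,V^{\varepsilon}_N(\eta_s)\,ds\Big|\Big)\Big],$$
where $H(\,\cdot\mid\cdot\,)$ denotes relative entropy. For the natural initial measures of Remark \ref{remark_perfil_inicial}, which are slowly varying product measures with fugacity bounded above by that of \eqref{varphi}, one has $H(\mu^N\mid\nu_*)=O(N)$, so the first term is $O(1/B)$ and disappears as $B\to\infty$. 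For the exponential term, a Feynman--Kac estimate together with the Rayleigh variational formula bounds it by $T$ times a supremum, over probability densities $f$, of $\int V^{\varepsilon}_N f\,d\nu_*$ minus a multiple of the symmetric Dirichlet form $\langle\sqrt f,-L_{N,0}\sqrt f\rangle$; since, by \eqref{gmostlinear}, $V^{\varepsilon}_N$ grows at most linearly and is localized near the boundary, this is a tractable \emph{static} variational problem. Crucially, the non-reversible boundary part $L_{N,b}$ carries the prefactor $N^{-\theta}$ with $\theta\ge 1$, so its contribution to the Dirichlet form is of lower order and does not affect the estimate.

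The static variational problem is then closed by the two standard blocks. The \textbf{one-block estimate} replaces the microscopic average of $g$ over a box of \emph{fixed} size $\ell$ by $\Phi$ of the empirical density in that box, and rests on the equivalence of ensembles for the zero-range process: conditionally on the number of particles in a box of size $\ell$, the empirical mean of $g$ concentrates on $\Phi$ of the density as $\ell\to\infty$. The \textbf{two-block estimate} passes from the fixed box $\ell$ to the macroscopic box $\varepsilon N$, showing that the densities of any two sub-boxes become asymptotically equal; this is where the mobility provided by the symmetric bulk dynamics enters, through a moving-particle argument controlled by the Dirichlet form. Throughout, the a priori moment bounds $\mathbb{E}_{\mu^N}[g(\eta_s(x))^\ell]\le C(\alpha)$ of Lemma \ref{tecnico}, obtained from attractiveness \eqref{gnondecrasing} and the domination \eqref{limitation}, are used to discard atypically high occupations and to justify the interchange of limits.

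The main obstacle is the non-compactness of the single-site state space $\mathbb{N}$. Unlike the exclusion and porous-medium models of \cite{baldasso,bonorino}, the occupation variables here are unbounded, so neither the equivalence of ensembles nor the two-block estimate may rely on compactness; both must be carried out under uniform tail control of the occupation numbers, using \eqref{tecnico3} to truncate $g$ in a way compatible with \eqref{gmostlinear}. A related tension, which I expect to be delicate, is that the clean entropy bound $H(\mu^N\mid\nu_*)=O(N)$ is available only for product-type initial data, whereas the rest of this work is developed for general $\mu^N\leq\bar\nu^N$; reconciling the two would require replacing the entropy step by an attractiveness-based comparison with $\bar\nu^N$, which is subtle precisely because $V^{\varepsilon}_N$ is \emph{not} a monotone function of $\eta$. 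Making these estimates uniform in $N$ and in the time-evolved law $\mu^N S^N(t)$ is the technical core that we postpone to future work.
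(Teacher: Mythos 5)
There is nothing in the paper to compare your proposal against: the paper explicitly does \emph{not} prove Lemma \ref{RLemma_boundary} (the footnote states that its proof is future work, and Section \ref{charac_limit_points_heu} only invokes the lemma as a black box in a heuristic derivation). Judged on its own, your outline is the natural one and consistent in spirit with the machinery of Kipnis--Landim that the paper cites: reduce via Markov's inequality to an expectation, transfer from the dynamic law to a reference measure by the entropy inequality and a Feynman--Kac/variational bound, and close the resulting static problem with one-block and two-block estimates based on equivalence of ensembles and the Dirichlet form of the symmetric bulk dynamics, with truncations to handle the unbounded occupation variables.

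However, your text is a plan rather than a proof, and the ``technical core'' you defer is exactly where the standard arguments do not apply verbatim, so the gaps are genuine. First, the entropy step $H(\mu^N\mid\nu_*)=O(N)$ is available only for product-type initial data as in Remark \ref{remark_perfil_inicial}, whereas the standing hypothesis of the paper is only the domination \eqref{limitation}, and the paper's stated program is precisely to \emph{avoid} entropy hypotheses; as you yourself observe, attractiveness cannot substitute directly because $V^{\varepsilon}_N$ is not a monotone function of $\eta$, so $\mu^N\leq\bar\nu^N$ gives no comparison for the relevant expectations. Second, the Feynman--Kac bound requires exponential integrability of $BN\int_0^T f_1(s)V^{\varepsilon}_N(\eta_s)\,ds$ under $\nu_*$; since the single-site marginals have tails $\varphi^k/g(k)!$ (merely geometric when $g$ is bounded), such exponential moments exist only for small parameters, so the $B\to\infty$ limit cannot be taken naively and must be interleaved with a density truncation whose removal needs a priori moment bounds under the dynamics. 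Note that Lemma \ref{tecnico}, which you invoke for this, controls moments of $g(\eta_s(x))$ only for $\ell=1,2$ and says nothing about moments of $\eta_s(x)$ itself when $g$ is bounded; moments of $\eta_s(x)$ would instead follow from attractiveness applied to the monotone functions $\eta\mapsto\eta(x)^\ell$ together with \eqref{limitation} and analyticity of the partition function, an argument the paper gives only statically (Lemma \ref{momentoslimitados}). Third, the claim that the boundary generator is harmless in the variational formula should be computed rather than declared: $L_{N,b}$ is not symmetric with respect to $\nu_*$, and after the $N^2$ acceleration and the $1/(BN)$ normalization its adjoint contributes terms of order $N^{1-\theta}/B$, which for $\theta=1$ vanish only in the subsequent limit $B\to\infty$, again entangling this step with the exponential-moment issue above. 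In short: right toolbox, honestly flagged obstructions, but the lemma remains unproven in your proposal just as it does in the paper.
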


As we said above $\overrightarrow{\eta}^{\varepsilon N}_{s}(1)\sim\rho_s(0)$ and 
$\overleftarrow{\eta}^{\varepsilon N}_{s}(N\!-\!1)\sim\rho_s(1)$, then the expressions in \eqref{b1} and in \eqref{b2} converge, as $N\to\infty$ and $\varepsilon\to 0$, to
\begin{align*}
&- \int_0^t \Big(\Phi(\rho_s(0))\,\partial_u G\left(0\right)-  \Phi(\rho_s(1))\,\partial_u G\left(1\right)\Big)\,ds,
\end{align*}
in case $\theta>1$, and 
\begin{align*}
&- \int_0^t \Big(\Phi(\rho_s(0))\,\partial_u G\left(0\right)-  \Phi(\rho_s(1))\,(\partial_u G\left(1\right)+G\left(1\right))-\alpha G\left(0\right)\Big)\,ds,
\end{align*}
in case $\theta=1$. These expressions are the boundary terms in the integral equations \eqref{eq_int}, with $\kappa=0$ and $\kappa=1$, respectively.

Summarizing, the expression of the Dynkin martingale, in \eqref{int_0}, converges to the left-hand side of the integral equation \eqref{eq_int} with $\kappa=0$ and $\kappa=1$, for $\theta>1$ and $\theta=1$, respectively. Since we are only providing an idea of the proof, to make clear the notation, up to this point we have been assuming that the test $G$ does not depends on the time, that is $G\in C^2[0,1]$.

Note that from \ref{condition2.2}, $\bb P_{\mu^N}[|M_t^G|>\delta]$ vanishes as $
N\to\infty$. Recall that  $ Q^* $ is a limit point of the sequence $\{Q^N\}$, which is defined by $Q^N=\bb P_{\mu^N}(\pi^N)^{-1}$. Then, using Portmanteau Theorem, we can conclude that, in the case $\theta=1$, $Q^*$ satisfies 
\begin{equation*}
\begin{split}
Q^*\Bigg[\pi_\cdot:\,\langle \rho_{t}, G_{0}\rangle-\langle \gamma , G_{0}\rangle&  -\;  \int_{0}^{t}\big\{ \langle \rho_{s} , \partial_{s}  G_{s} \rangle + \langle \Phi(\rho_s) , \Delta G_s \rangle\big\}  \,ds \\&- \int_{0}^{t}\big\{\Phi(\rho_s(0))\partial_u G_s(0)-\Phi(\rho_s(1))\partial_u G_s(1)\big\} \,ds\\
&-\int_{0}^{t}\big\{\alpha G_s(0)-\Phi(\rho_s(1))G_s(1)\big\} \,ds \,=\,0\,,\\ &\qquad\quad\qquad\quad\forall t\in[0,T],\; \forall G\in C^{1,2}([0,T]\times[0,1]) \Bigg]= 1.
\end{split}
\end{equation*}
Note that the expression inside the probability above is the integral equation  \eqref{eq_int} with $\kappa =1$.
In the case $\theta>1$, using the same argument, we obtain a similar expression as the one above with the integral equation \eqref{eq_int} with $\kappa =0$ instead of $\kappa=1$.

\begin{remark}
	In order to show that the boundary terms $\Phi(\rho_s(0))$ and $\Phi(\rho_s(1))$ in the integral equation  \eqref{eq_int} are well defined  we need to assure that $\Phi(\rho)$ belongs to $L^2(0,T;\mathcal H^1(0,1))$. To obtain it, using Riesz representation theorem, it is enough to prove the Energy Estimate, that is: 
		\begin{equation*}
		\mathbb{E}_{ Q^*}\left[ \sup _{H}\Big\{ \int_0^T\langle \Phi(\rho_s),\partial_uH_s\rangle\,ds-c \int_0^T\langle H_s,H_s\rangle\,ds \Big\}\right] \,\leq \, M_0 < \infty,
		\end{equation*}
	for some constants $M_0$ and $c$.  As usual, the supremum above is taken over all functions $H\in C^{0,1}([0,T]\times [0,1])$ with compact support. The notation $\mathbb{E}_{Q^*}$ means the expectation with respect to the measure $ Q^*$, which is   the limit point of $Q^N$.  
\end{remark}

\section{Heuristics for hydrodynamics of the general model}\label{B}
If we had considered the more general slow boundary introduced in Remark \ref{R}, see Figure \ref{fig.2}, 
the Dynkin martingale \eqref{int_0} would be
\begin{align*}
M_t^G= & \left\langle \pi^N_t,G \right\rangle-\left\langle \pi^N_0,G \right\rangle-\int_0^t \frac{1}{N}\sum_{x=2}^{N-2}g(\eta_s(x))\Delta_N G\left(\tfrac{x}{N}\right)\,ds\nonumber \\
&- \int_0^t \Big(g(\eta_s(1))\nabla^+_N G\left(\tfrac{1}{N}\right)-  g(\eta_{s}(N-1))\nabla^-_NG\left(\tfrac{N-1}{N}\right)\Big)\,ds\\
&- \int_0^t\Bigg\{\Big(\frac{\alpha-\lambda g(\eta_{s}(1))}{N^{\theta-1}}\Big)\, G\left(\tfrac{1}{N}\right)+\Big(\frac{\beta-\delta g(\eta_{s}(N-1))}{N^{\theta-1}}\Big)\,G\left(\tfrac{N-1}{N}\right)\Bigg\}\,ds,\nonumber
\end{align*}
for  $G\in C^2[0,1]$.
Using similar ideas as in Section \ref{charac_limit_points_heu}, we get that the limit point $Q^*$ satisfies 
\begin{equation*}
\begin{split}
Q^*\Bigg[\pi_\cdot:\,\langle \rho_{t}, G_{0}\rangle-\langle \gamma , G_{0}\rangle&  -\;  \int_{0}^{t}\big\{ \langle \rho_{s} , \partial_{s}  G_{s} \rangle + \langle \Phi(\rho_s) , \Delta G_s \rangle\big\}  \,ds \\&- \int_{0}^{t}\big\{\Phi(\rho_s(0))\partial_u G_s(0)-\Phi(\rho_s(1))\partial_u G_s(1)\big\} \,ds\\
-\kappa\int_{0}^{t}\Big\{&\big(\alpha-\lambda\Phi(\rho_s(0))\big)\, G_s(0)+\big(\beta-\delta\Phi(\rho_s(1))\big)\,G_s(1)\Big\} \,ds \,=\,0\,,\\ &\qquad\quad\qquad\quad\forall t\in[0,T],\; \forall G\in C^{1,2}([0,T]\times[0,1]) \Bigg]= 1.
\end{split}
\end{equation*}
Above, $\kappa=1$ in the case $\theta=1$ and $\kappa=0$ in the case $\theta>1$. Therefore, the hydrodynamic equation is 
\begin{equation}\label{general_hid_eq}
\left\{
\begin{array}{rcll}
\partial_t \rho_t(u) &=& \Delta \Phi (\rho_t(u)),& \text{for } u\in(0,1)\text{ and } t\in(0,T],$$\\
\partial_u\Phi( \rho_t(0))&=&-\kappa\,\big(\alpha-\lambda\Phi(\rho_s(0))\big), & \text{for } t\in(0,T],$$\\
\partial_u\Phi( \rho_t(1))&=&\kappa\, \big(\beta-\delta\Phi(\rho_s(1))\big), & \text{for } t\in(0,T],$$\\
\rho_0(u)&=&\gamma(u), & \text{for } u\in[0,1].
\end{array}
\right.
\end{equation}

\begin{acknowledgement}
	A.N. was supported through a grant ``L'OR\' EAL - ABC - UNESCO Para Mulheres na Ci\^encia''. 
\end{acknowledgement}

\end{document}